\newtheorem{theorem}{Theorem}[section]
\newtheorem{corollary}{Corollary}[theorem]
\newtheorem{lemma}[theorem]{Lemma}
\newtheorem{remark}{Remark}[section]
\newtheorem{proposition}[theorem]{Proposition}
\newtheorem{assumption}{Assumption}
\newtheorem{example}{Example}[section]
\newcommand{\bs}[1]{\boldsymbol{#1}}
\newcommand{\R}{{\mathds{R}}}
\newcommand{\D}{{\mathfrak D}}
\newcommand{\Z}{{\mathfrak Z}}
\newcommand{\Ga}{{\mathfrak G}}
\newcommand{\ES}{{\mathfrak S}}
\newcommand{\VV}{{\mathcal V}}
\newcommand{\Na}{{N_{\rm a}}}
\newcommand{\No}{{N_{\rm o}}}
\newcommand{\Zz}{\bs{Z}}
\definecolor{mygreen}{rgb}{0.1, 0.45, 0.1}
\newcommand{\newtext}[1]{{\color{black}{#1}}}
\begin{document}

\title{Nonlinear Opinion Dynamics with Tunable Sensitivity\thanks{This research has been supported in part by NSF grant CMMI-1635056, ONR grant N00014-19-1-2556, ARO grant W911NF-18-1-0325, DGAPA-UNAM PAPIIT grant IN102420, Conacyt grant A1-S-10610,and by NSF Graduate Research Fellowship DGE-2039656. Any opinion, findings, and conclusions or recommendations expressed in this material are those of the authors and do not necessarily reflect the views of the NSF.}}

\author{Anastasia~Bizyaeva,
        Alessio~Franci,
        and~Naomi~Ehrich~Leonard
\thanks{A. Bizyaeva and N.E. Leonard are with the Department
of Mechanical and Aerospace Engineering, Princeton University, Princeton,
NJ, 08544 USA; e-mail: bizyaeva@princeton.edu, naomi@princeton.edu.}
\thanks{A. Franci is with Math Department of the National Autonomous University of Mexico, 04510 Mexico City, Mexico. e-mail: afranci@ciencias.unam.mx}

}

\maketitle

\begin{abstract}
\newtext{We propose a continuous-time multi-option nonlinear generalization of classical linear weighted-average opinion dynamics. Nonlinearity is introduced by saturating opinion exchanges, and this is enough to enable a significantly greater range of opinion-forming behaviors with our model as compared to existing linear and nonlinear models. For a group of agents that communicate opinions over a network, these behaviors include multistable agreement and disagreement, tunable sensitivity to input, robustness to disturbance, flexible transition between patterns of opinions, and opinion cascades.  We derive network-dependent  tuning rules to robustly control the system behavior and we design state-feedback dynamics for the model parameters to make the behavior adaptive to changing external conditions.} The  model provides new means for systematic study of dynamics on natural and engineered networks, from information spread and political polarization to collective decision making and dynamic task allocation.
\end{abstract}

\section{Introduction.}

Opinion dynamics of networked agents are the subject of long-standing  interdisciplinary interest, and there is a large and growing literature on agent-based models created to study mechanisms that drive the formation of consensus  and opinion clustering in groups. These models appear, for example, in studies of collective animal behavior and voting patterns in human social networks. In engineering, they are fundamental to designing distributed coordination of autonomous agents and dynamic allocation of tasks across a network.

Agent-based models are typically used to investigate parameter regimes and network structures for which opinions in a group converge over time to a desired configuration. However, natural groups exhibit much more flexibility than captured with existing models. Remarkably, groups in nature can rapidly switch between different opinion configurations in response to  changes in their environment, and they can break deadlock, i.e., choose among options with little, if any, evidence that one option is better than another. Understanding the mechanisms that explain the temporal dynamics of opinion formation in groups and  the ultra-sensitivity and robustness needed for groups to pick out meaningful information and to break deadlock in uncertain and changing environments is important in its own right. It is also pivotal to developing  the means to design provably adaptable yet robust control laws for robotic teams and other networked multi-agent systems. 

Motivated by these observations, we explore the following questions in this paper. How can a network of decision makers come rapidly and reliably to  coherent configurations of opinions, including both agreement and disagreement, 
on multiple options in response to, or in the absence of, internal biases or external inputs? How can a network reliably transition from one configuration of opinions to another in response to   change? How can the sensitivity of the opinion formation process be tuned so that meaningful signals are distinguished from spurious signals?  To investigate these questions, we present an agent-based dynamic model of the opinion formation process that generalizes linear and existing nonlinear models. 
The model is rich in the behaviors it exhibits yet tractable to analysis by virtue of the small number of parameters needed to generate the full range of behaviors. 

\newtext{We emphasize that our modeling approach is distinct from existing models in the literature in the following way. Models of opinion formation are typically built on the fundamental assumption that individuals update their opinions through a linear averaging process ~\cite{DeGroot1974, FriedkinJohnsen1999,CisnerosVelarde2019PolarizationAF,OlfatiSaber2004,Altafini2013}. Additional feedback dynamics are then often imposed on the coupling weights between agents,  for example in bounded confidence models \cite{Deffuant2000,HegselmannKrause2002,hegselmann_opinion_2005, BlondelTAC2009}, biased assimilation models \cite{Dandekar2013,xia2020analysis}, and models of evolution of social power \cite{Peng2015,Basar2018}. Nonlinearity thereby arises through the superposition of linear opinion dynamics and nonlinear coupling-weight dynamics. When persistent disagreement is observed, it is necessarily the consequence of the dynamic updating of the coupling weights. However, state-dependent interactions are not the only way for a network to achieve structurally stable disagreement. We are instead proposing that the opinion update process itself is fundamentally nonlinear due to saturation of information. We introduce a new multi-option nonlinear model of opinion formation with saturated interactions in Section \ref{sec: multi option opinion dynamics} and in Section \ref{sec:agree_disagree} we prove that this modeling assumption supports persistent disagreement with a completely static interaction network. 

As is done for linear models, dynamic feedback can also be introduced to the nonlinear model parameters. 
We  explore the effects of several dynamic parameter update laws in detail in Section \ref{sec:feedback}. The feedback laws we consider are simple, yet they make our model adaptive to changing external conditions with tunable sensitivity and they allow robust and tunable transitions between distinctly different patterns of opinions.


Our model generalizes  recent literature on opinion formation with input saturation \cite{AF-VS-NEL:15a,fontan2017multiequilibria,Gray2018,Abara2018,Fontan2018,ding2018network,hu2018event}. Closely related to these are nonlinear models that leverage coupled oscillator dynamics \cite{MirolloStrogatz1990,Nabet2009,Leonard2012}, biologically inspired mean-field models \cite{Pais2013}, and the Ising model 
\cite{Gov2018,Siegenfeld2020}.} 


Our major contributions are as follows.
1) We introduce a new \newtext{nonlinear} model for the study of multi-agent, multi-option opinion dynamics. 
The model has
a social term weighted by an attention parameter, which can also represent social effort or susceptibility to social influence, and an input term, which can represent, e.g., external stimuli, bias, or persistent opinions.  

2) We show that the model exhibits a rich variety of opinion-formation behaviors governed by bifurcations. This includes rapid and reliable opinion formation and  multistable agreement and disagreement, with flexible transitions between them. It also includes ultra-sensitivity to inputs near the opinion forming bifurcation, and robustness
to  disturbances and uncertainties, away from the bifurcation. Moreover, the behaviors are governed by a small number of key parameters, rendering the model analytically tractable. We prove the central role of the spectral properties of the network graph adjacency matrix in informing the model behavior. 

3) We show how the model recovers a range of models in the literature for suitable parameter combinations and/or when linearized, 
and how the reliance on structurally unstable network conditions
in linear models
breaks down in the nonlinear setting.  
\newtext{The central role of the network graph adjacency matrix in our nonlinear model generalizes the central role of the network graph Laplacian in opinion dynamics in the literature. We show that the right and left adjacency matrix eigenstructures determine patterns of opinion and sensitivity to inputs, respectively.}

4) We introduce distributed adaptive feedback dynamics to the agent 
parameters. 
We show how design parameters in the attention feedback  allow tunable sensitivity of opinion formation to inputs and robustness to changes in inputs,  as well as tunable opinion cascades even in response to a single agent receiving an input.  

5) 
We examine tunable transitions between consensus and dissensus using feedback dynamics also on network weights.

\newtext{
We define notation in Section~\ref{sec:notation}. We present the new nonlinear opinion dynamics model in Section~\ref{sec: multi option opinion dynamics}. In Section~\ref{sec:agree_disagree} we prove results on agreement and disagreement opinion formation for the new model. We introduce attention dynamics and prove results on tunable sensitivity in Section~\ref{sec:feedback} in the special case of two options.
In Section~\ref{sec:transition}, we  illustrate feedback controlled transitions between agreement and disagreement. We conclude in Section~\ref{sec: final remarks}.}
\newtext{
\section{Notation}
\label{sec:notation}

Given  $\mathbf{y} \in \mathds{R}^{n}$, the norm $\| \mathbf{y} \|$ is the standard Euclidean 2-norm and $\operatorname{diag}\{\mathbf{y} \} \in \mathds{R}^{n \times n}$ is a diagonal matrix with $y_i$ in row $i$, column $i$. Let $\mathcal{I}_N \in \R^{N \times N}$ be the identity matrix, $\mathbf{1}_N \in \R^N$  the vector of ones, and $P_0 = (\mathcal{I}_{\No} - \frac{1}{\No}\mathbf{1}_{\No}\mathbf{1}_{\No}^T)$  the projection onto $\mathbf{1}_\No^{\perp}$. Let $\R\{\mathbf{v}_1,\ldots,\mathbf{v}_k\}$ be the span of  vectors $\mathbf{v}_1,\ldots,\mathbf{v}_k\in\R^n$. We define  $\mathbf{v}_i \in \mathds{R}^n$ component-wise as $(v_{i1}, \dots, v_{i n})$. 
Let $U$, $V$ and $W$ be vector spaces. $U$ is the \textit{direct sum} of $V$ and $W$, i.e., $U = V \oplus W$, if and only if $V = U + W$ and $U \cap W = \{ 0 \}$. Given matrices $B = (b_{ij}) \in \mathds{R}^{m \times n}$ and $C = (c_{ij}) \in \mathds{R}^{p \times q}$, the \textit{Kronecker product} $B \otimes C \in \mathds{R}^{mp\times nq}$ has entries $(B \otimes C)_{pr+v,qs+w} = b_{rs} c_{vw}$. 

Let the set of vertices $V = \{ 1, \dots, N_{a} \}$ index a group of $N_{a}$ agents, and let edges $E \subseteq V \times V$ represent interactions between agents. If edge $e_{ik} \in  E$, then agent $k$ is a \textit{neighbor} of agent $i$. The communication topology between agents is captured by the \textit{directed} graph $ G = (V,  E)$ and its associated adjacency matrix $A \in \mathds{R}^{N_a\times N_a}$. $A$ is made up of elements $ a_{ik}$, and $a_{ik} \neq 0$ if and only if agent $k$ is a neighbor of agent $i$. When  $A$ is symmetric (i.e., communication between agents is bidirectional), the graph is \textit{undirected}. 
}

\section{Nonlinear multi-option opinion dynamics}
\label{sec: multi option opinion dynamics}
\newtext{
In this section we present our nonlinear model of opinion dynamics for a network of interacting agents that form opinions about an arbitrary number of options. In Section~\ref{sec:linear_models} we recall the classical consensus model of DeGroot \cite{DeGroot1974} and several of the extensions that have been proposed and studied in the literature. All of the cited models (with one exception noted) use an opinion update rule that depends on a linear weighted-average of exchanged opinions. In our model, as discussed in Section~\ref{sec:properties} and formalized in Section~\ref{sec:model}, we apply a saturation function to opinion exchanges, which makes the update rule fundamentally nonlinear, even before introducing extensions. The fundamentally nonlinear update rule makes all the difference with respect to generality and flexibility of the model as we show here and in the rest of the paper. 

\subsection{Linear Averaging Models: Drawbacks and 
Extensions \label{sec:linear_models}}

Opinion formation is classically modeled as a weighted-averaging process, as originally introduced by DeGroot \cite{DeGroot1974}. In this framework an agent's opinion $x_i\in\R$ reflects how strongly the agent supports an issue or topic of interest. The real-valued opinion is updated in discrete time as a weighted average of the agent's own and other agents' opinions, i.e.,
\begin{equation}\label{EQ: DeGroot}
x_i(T+1)=a_{i1}x_{1}(T)+\cdots+a_{i\Na}x_{\Na}(T)
\end{equation}
where $a_{i1}+\cdots+a_{i\Na}=1$ and $a_{ik} \geq 0$. The  weights $a_{ik}$ describe the influence of the opinion of agent $k$ on the opinion of agent $i$ and the matrix $A\in\R^{\Na\times\Na}$ with entries $a_{ik}$ represents the structure of the influence network.

A key drawback of linear weighted-average models is that consensus among the agents is the only possible outcome.
As observed in \cite{mei2020rethinking}, this necessarily happens because the attraction strength of agent $i$'s opinion toward agent $k$'s opinion increases linearly with the difference of opinions between the two agents. In other words, the more divergent the two agents' opinions are, the more strongly they are attracted to each other, which is paradoxical from an opinion formation perspective. 

To overcome these limitations, a number of prominent variations on averaging models have been proposed. For example in ``bounded confidence" models, agents average network opinions but delete communication links to any neighbors whose opinions are sufficiently divergent from their own \cite{Deffuant2000,HegselmannKrause2002,hegselmann_opinion_2005, BlondelTAC2009}.  In a similar spirit, ``biased assimilation" models instead incorporate a self-feedback into the interaction weights of an averaging model \cite{Dandekar2013,xia2020analysis}. This self-feedback accounts for an individual's bias towards evidence that conforms with its existing beliefs. The linear model and its variations have also been extended to the case of signed networks, where the linear weights $a_{ik}$ can be negative \cite{Altafini2013,Liu2017,shi2019dynamics}. Meanwhile, in \cite{mei2020rethinking} the authors do away with averaging altogether and instead propose that opinions form through a weighted-median mechanism. 

In the present paper we propose an alternative perspective to this literature: driven by the above motivation and the model-independent theory developed in \cite{AF-MG-AB-NEL:20}, we introduce a parsimonious nonlinear extension of linear weighted-average opinion dynamics that leverages the saturation function. 

The linear weighted-average discrete-time opinion dynamics \eqref{EQ: DeGroot} can equivalently be written as
\[
x_i(T+1)=x_i(T)+\Big(-x_i(T)+a_{i1}x_{1}(T)+\cdots+a_{i\Na}x_{\Na}(T)\Big)\,.
\]
This discrete-time update rule is the unit time-step Euler discretization of the continuous time linear dynamics
\begin{equation}\label{EQ: linear consensus}
\dot x_i=-x_i+a_{i1}x_1+\cdots+a_{i\Na}x_{\Na}.
\end{equation}
Observe that~\eqref{EQ: DeGroot} and~\eqref{EQ: linear consensus} have exactly the same steady states with the same (neutral) stability.

The linear consensus dynamics~\eqref{EQ: linear consensus} are determined by two terms: a weighted-average opinion-exchange term, modeling the pull felt by agent $i$ toward the weighted group opinion, and a linear damping term, which can be interpreted as the agent's {\it resistance} to changing its opinion.

\subsection{Nonlinear Multi-option Extension of Weighted-average Models: Defining Properties\label{sec:properties}}

Our goal is to derive a novel nonlinear extension of~\eqref{EQ: linear consensus} satisfying the following defining properties.\\
{\bf 1. Opinion exchanges are saturated}.  Saturated nonlinearities appear in virtually every natural and artificial signaling network due to bounds on action and sensing. For example dynamics that evolve according to saturating interactions appear in spatially localized and extended neuronal population models of thalamo-cortical dynamics~\cite{wilsoncowan1972,wilsoncowan1973}, in Hopfield neural network models \cite{JJH:82,JJH:84,nakamura_neural-network_1995}, in 
models of perceptual decision making \cite{UM2001,Bogacz2007}, and in control systems with sensor and actuator saturations \cite{liu1994dynamical,hu2001control}. Saturated interactions between decision-makers also effectively bound the attraction between opinions, thus overcoming the linear weighted-average model paradox mentioned above.\\ 
{\bf 2. Multi-option opinion formation}. Allowing for an arbitrary number of options makes the model relevant to a wide range of applications, for example, in task allocation problems where options represent tasks or in strategic settings where options represent strategies.  We extend the model to multiple options by suitably generalizing the agent's opinion state space, analogous to existing multi-option extensions of averaging models such as \cite{Fortunato2005,Nedic2012,LiScaglione2013,friedkin_network_2016,Parsegov2017,Pan2019,ye_consensus_2019}.

To construct this extension formally, observe that in the scalar opinion setting, $x_i>0$ ($x_i<0$) is usually interpreted as favoring (disfavoring) an option A and disfavoring (favoring) an option B. The strength of favoring or disfavoring is represented by the magnitude $|x_i|$ and $x_i=0$ is interpreted as being neutral. This formalism is equivalent to one in which each agent is characterized by two scalar variables $z_{iA}$ (modeling the preference of agent $i$ for option A) and $z_{iB}$ (modeling the preference of agent $i$ for option B) that are ``mutually-exclusive'', i.e., that satisfy $z_{iA}+z_{iB}=0$. The scalar opinion is then obtained simply by defining $x_i=z_{iA}$. 
 This observation leads to the following multi-option generalization of the state space of model~\eqref{EQ: linear consensus}. Given $\No$ options, we model each agent's opinion state space as the subspace $\mathbf{1}_\No^{\perp}\subset\R^{\No}$.
Thus, in our model, the opinion state of agent $i$, $i=1,\ldots,\Na$, is described by the state variable $\Zz_i\in \mathbf{1}_\No^{\perp}$, with components $z_{ij}$,  $j=1,\ldots,\No$. When $\Zz_i=0$, we say that agent $i$ is {\it neutral} or {\it unopinionated}. When $\Zz_i\neq 0$ we say that the agent is {\em opinionated}. The full model state space is
$V=\underbrace{\mathbf{1}_\No^{\perp}\times\cdots\times \mathbf{1}_\No^{\perp}}_{\Na\text{ times}}$,
and $\Zz = (\Zz_1, \ldots, \Zz_{\Na}) \in V$ is the system state. The origin $\Zz = 0$ is the {\em neutral point}.  Another way of interpreting our choice of $\mathbf{1}_\No^{\perp}$ as an agent's state space  comes from observing that $\mathbf{1}_\No^{\perp}$ is the tangent space to the $(\No-1)$-dimensional simplex in $\R^{\No}$. Because $\mathbf{1}_\No^{\perp}$ and the simplex are isomorphic, our modeling approach naturally applies to multi-option decision-making problems in which an agent's state space is the $(\No-1)$-dimensional simplex. This is useful when the agents' opinions are interpreted as probabilities of choosing options, for example, in the case of mixed strategies in games where an option refers to a strategy \cite{ParkAllerton2021}. 
For more details on the connection to simplex dynamics see Appendix~\ref{app:simplex}.\\
\noindent {\bf 3. Agents have allocable attention}. Because an agent's attention or susceptibility to exchanged opinions may be variable, we introduce, for each agent $i$,  two parameters, $d_i>0$ and $u_i\geq0$, that weight the relative influence of the linear resistance term and the opinion-exchange term, respectively. When the {\it resistance parameter} $d_i$ dominates the {\it attention parameter} $u_i$, the agent is weakly attentive to other agents' opinions. When $u_i$ dominates $d_i$, the agent is strongly attentive to other agents' opinions. A shift from a weakly attentive to a strongly attentive state can be induced, for instance, by a time-urgency (election day approaching) or a spatial-urgency (target getting closer) to form an informed collective opinion. The attention parameter $u_i$ can also be used to model {\em social effort}, {\em excitability}, or {\em susceptibility} of agent $i$ to social influence.\\
{\bf 4. Agents have exogenous inputs}.  For each agent, we introduce an input parameter $b_{ij}$, which	represents an input signal from the environment or a bias or predisposition that directly affects agent $i$'s opinion of option $j$. For example, the input $b_{ij}$ can be used to model the exogenous influence of agent $i$'s initial opinions, as in  \cite{FriedkinJohnsen1999}, where agents hold on to their initial opinions (sometimes called ``stubborn" agents as in \cite{GhaderiSrikant2014}). 

If the attention and/or bias parameters are hard or impossible to measure or control, which may be the case in sociopolitical applications, we can use standard homogeneity assumptions, e.g., $d_i=1$, $u_i=u$, $b_{ij}=0$ for all agents, and include random perturbations to capture modeling uncertainties. 
In technological applications (e.g. robotic swarms), however, tunable parameters of the model provide novel, analytically tractable means to design complex collective behaviors - see for example \cite{franci2021analysis}. }

\subsection{A General Nonlinear Opinion Dynamics Model}
\label{sec:model}

\begin{figure}
    \centering
        \includegraphics[width=0.3\textwidth]{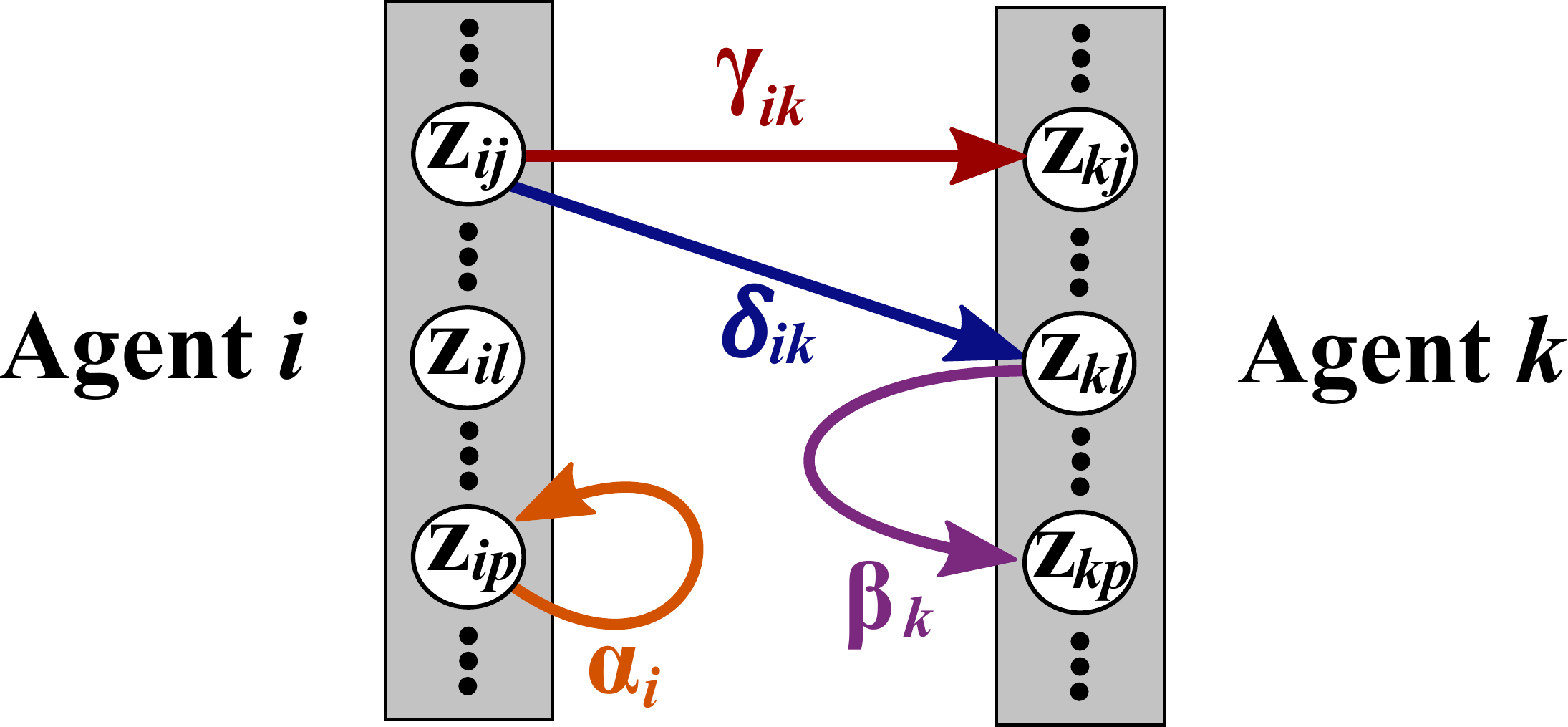}
    \caption{Illustration of the four classes of interactions. \newtext{An arrow from $z_{ij}$ to $z_{kl}$ means the opinion of agent $k$ about option $l$ is influenced by the opinion of agent $i$ about option $j$, modulo the labeled gain. }}
    \label{fig:network} 
\end{figure}

\newtext{In the multi-option setting, there are four possible types of coupling in the resulting opinion network (see Figure \ref{fig:network}):} \\
1) \textit{Intra-agent, same-option coupling}, with gain \newtext{$\alpha_i$};\\
2) \textit{Intra-agent, inter-option coupling}, with gain \newtext{$\beta_i$};\\
3) \textit{Inter-agent, same-option coupling}, with gains \newtext{$\gamma_{ik}$, $i \neq k$ };\\
4) \textit{Inter-agent, inter-option coupling}, with gains \newtext{ $\delta_{ik}$, $i \neq k$. }\\
Parameters $\alpha_i$, $\beta_i$, $\gamma_{ik}$, $\delta_{ik}$ determine  qualitative properties of  opinion interactions. Parameter $\alpha_i$ determines  sign and magnitude of opinion self-interaction for agent $i$. To avoid redundancy with resistance  $d_i$, we assume $\alpha_i\geq0$, i.e., either no self-coupling ($\alpha_i=0$) or {\em self-reinforcing} coupling ($\alpha_i>0$). Parameter $\beta_i$ determines how different intra-agent opinions interact.
Parameters $\gamma_{ik}$ and $\delta_{ik}$ determine whether agents $i$ and $k$ {\em cooperate} ($\gamma_{ik}-\delta_{ik}>0$) or {\em compete} ($\gamma_{ik}-\delta_{ik}<0)$. When different option dimensions have no interdependence, we can set $\beta_{i} = \delta_{ik} = 0$ for all $i,k = 1, \dots, \Na$. 
\newtext{The proposed general nonlinear opinion dynamics are
\begin{subequations}\label{EQ:generic decision dynamics}
	\begin{align}
	& \quad \; \; \;\dot \Zz_i=P_0{\bs F}_i(\Zz)\\
	&F_{ij}(\Zz)=-d_{i}z_{ij} +b_{ij}+  
	u_{i}\left(  S_{1}\left( {\alpha_i z_{ij} + \textstyle\sum_{\substack{k\neq i\\k=1}}^\Na \gamma_{ik} z_{kj}}\right) \right. \nonumber \\ &
	\left.+ \textstyle\sum_{\substack{l\neq j\\l=1}}^\No S_{2}\left( {\beta_i z_{il} + \textstyle\sum_{\substack{k\neq i\\k=1}}^\Na \delta_{ik} z_{kl}}\right)\right). \label{eq:modelb} 
	\end{align}
\end{subequations}}
$S_{q}: \R \to [-k_{q1},k_{q2}]$ with $k_{q1},k_{q2} \in \R^{>0}$ 
for $q \in \{1,2\}$ is a generic sigmoidal saturating function satisfying constraints  $S_{q}(0) = 0$, $S_{q} '(0) = 1$, $S_{q} ''(0) \neq 0 $, $S_{q}'''(0) \neq 0 $. $S_{1}$ saturates same-option interactions, and $S_{2}$ saturates inter-option interactions. $S_1$ and $S_2$ could be the same but are distinguished  in \eqref{EQ:generic decision dynamics} for a more general statement of the model.\newtext{ We provide an even more general formulation of the model in Appendix~\ref{app:adjacencytensor} that makes use of an adjacency tensor and allows for the possibility of heterogeneous interactions between options.
In the following we let
\begin{equation}\label{eq: gamma delta matrices}
    \Gamma=[\gamma_{ik}]\in\R^{\Na\times\Na},\quad \Delta=[\delta_{ik}]\in\R^{\Na\times\Na}.
\end{equation}
We note that in~\eqref{EQ:generic decision dynamics} the sum over the agents could be brought outside of the two sigmoids without altering the qualitative behavior of the model. 
Our choice in~\eqref{EQ:generic decision dynamics}  corresponds to an opinion network with saturated inputs. Bringing the sum over the agents outside the sigmoids  corresponds to an opinion network with saturated outputs. Either choice could be useful depending on the application. On the other hand, the sum over the options cannot be brought inside $S_2$ as the mutual exclusivity condition $\Zz_i\in \mathbf{1}_\No^{\perp}$ would lead to spurious term cancellations for some parameter choices. Intuitively, this means that opinions about different options are processed though different input channels. Dynamics~\eqref{EQ:generic decision dynamics} are well defined on the system state space $V$, as we rigorously prove in Appendix \ref{app:simplex}. }

Let $\hat{b}_{i} = \frac{1}{\No} \sum_{l = 1}^{\No} b_{il}$ be the {\em average input to agent $i$} and let $b_{ij}^\perp = b_{ij} - \hat b_i$ be the {\em relative input to agent $i$ for option $j$}.

\newtext{\begin{lemma}\label{lem:relative_bias}The dynamics \eqref{EQ:generic decision dynamics} are independent of the average input $\hat{b}_{i}$ in the sense that $\frac{\partial \dot{z}_{ij}}{\partial \hat{b}_{i}} = 0$. 
\end{lemma}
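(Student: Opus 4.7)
The plan is to exploit the defining property of the projection $P_0$ onto $\mathbf{1}_\No^{\perp}$, namely that $P_0$ annihilates every constant vector in $\R^{\No}$, together with the observation that $\hat{b}_i$ enters $\bs{F}_i(\Zz)$ only as a uniform additive shift across all option components $j=1,\ldots,\No$.

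First I would decompose the input via the definition $b_{ij}=b_{ij}^\perp+\hat{b}_i$, and split $\bs{F}_i(\Zz)$ into three parts: (i) the terms depending on $\Zz$ through $d_i z_{ij}$ and through the sigmoids $S_1,S_2$, whose arguments involve only the state $\Zz$ and the coupling parameters $\alpha_i,\beta_i,\Gamma,\Delta$, (ii) the relative-input vector $\bs{b}_i^\perp$ with components $b_{ij}^\perp$, and (iii) the vector $\hat{b}_i\mathbf{1}_\No$. Part (i) is manifestly independent of $\hat{b}_i$ since no argument of $S_1$ or $S_2$ contains $b$-parameters at all, and part (ii) is independent of $\hat{b}_i$ by construction of the orthogonal decomposition.

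Next I would apply $P_0$ to $\bs{F}_i$ and use $P_0(\hat{b}_i \mathbf{1}_\No)=\hat{b}_i\, P_0\mathbf{1}_\No=0$. This shows that the only appearance of $\hat{b}_i$ in $\bs{F}_i$ is killed by the projection, so $\dot\Zz_i=P_0\bs{F}_i(\Zz)$ has no functional dependence on $\hat{b}_i$, whence $\partial \dot z_{ij}/\partial \hat{b}_i=0$ for every $j$. Equivalently, at the component level one can write $\dot z_{ij}=F_{ij}-\tfrac{1}{\No}\sum_{l=1}^{\No}F_{il}$ and check directly that $\partial F_{il}/\partial \hat{b}_i=1$ for every $l$, so the two contributions cancel.

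There is no real obstacle: the lemma is a compatibility statement between the input structure and the mutual-exclusivity constraint $\Zz_i\in\mathbf{1}_\No^{\perp}$ built into the state space. The only point requiring explicit verification is that no argument of either sigmoid secretly depends on $\hat{b}_i$, which is clear from \eqref{eq:modelb} since those arguments are linear combinations of the state variables $z_{k\cdot}$ only.
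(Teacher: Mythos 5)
Your proposal is correct and follows essentially the same route as the paper's own proof: both rest on the single observation that $P_0$ annihilates the constant vector $\hat{b}_i\mathbf{1}_{\No}$, so that $P_0\mathbf{b}_i=\mathbf{b}_i^{\perp}$ and the average input never reaches $\dot{\Zz}_i$. Your version merely spells out the decomposition and the component-level cancellation that the paper leaves implicit as ``trivially from the form of the dynamics.''
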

\begin{proof}
Recall that $P_0$ is the projection onto $\mathbf{1}_\No^{\perp}$ as defined in Section~\ref{sec:notation}. Then $P_0 \mathbf{b}_i = \textbf{b}_{i}^{\perp}$, and the conclusion follows trivially from the form of \eqref{EQ:generic decision dynamics}.
\end{proof}}
Lemma~\ref{lem:relative_bias} implies that only  relative inputs affect the location of the equilibria of the opinion dynamics \eqref{EQ:generic decision dynamics}.  
\begin{assumption} \label{assump:bperp}
In light of Lemma \ref{lem:relative_bias}, for the remainder of the paper we assume without loss of generality that the average input $\hat{b}_i = 0$ for all $i = 1, \dots, \Na$. Thus, $b_{ij} = b_{ij}^{\perp}$.  
\end{assumption}

Without relative inputs, the system \eqref{EQ:generic decision dynamics} always has the neutral point as an equilibrium.

\begin{lemma} \label{lem:zeroinput}
$\Zz = 0$ is an equilibrium for \eqref{EQ:generic decision dynamics} if and only if there are no relative inputs, i.e., $b_{ij} = 0$ for all $i$  and all $j$.
\end{lemma}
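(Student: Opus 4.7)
The plan is to substitute $\Zz = 0$ directly into the dynamics~\eqref{EQ:generic decision dynamics} and use the hypothesis $S_q(0) = 0$ for $q \in \{1,2\}$ together with Assumption~\ref{assump:bperp} to isolate the contribution of the inputs. The forward direction ($b_{ij} = 0 \implies \Zz = 0$ equilibrium) is then immediate; the reverse direction ($\Zz = 0$ equilibrium $\implies b_{ij} = 0$) requires a brief argument that the projection $P_0$ does not collapse a nontrivial $\bs{b}_i$ to zero once we've already normalized $\hat b_i = 0$.

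First I would evaluate $F_{ij}$ at the neutral point. Since each of the sigmoid arguments $\alpha_i z_{ij} + \sum_{k \neq i} \gamma_{ik} z_{kj}$ and $\beta_i z_{il} + \sum_{k \neq i} \delta_{ik} z_{kl}$ vanishes when every $z_{kj} = 0$, and since $S_1(0) = S_2(0) = 0$ by hypothesis, the entire coupling contribution in~\eqref{eq:modelb} is zero. Likewise the resistance term $-d_i z_{ij}$ vanishes. Therefore $F_{ij}(0) = b_{ij}$, i.e., $\bs{F}_i(0) = \bs{b}_i$. Applying the projection, $\dot\Zz_i\big|_{\Zz=0} = P_0 \bs{b}_i = \bs{b}_i^\perp$.

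Next, I would invoke Assumption~\ref{assump:bperp}, under which $\hat b_i = 0$ and hence $b_{ij} = b_{ij}^\perp$; equivalently $\bs{b}_i \in \mathbf{1}_\No^\perp$, so $P_0 \bs{b}_i = \bs{b}_i$. Consequently $\dot\Zz_i\big|_{\Zz=0} = \bs{b}_i$, and this vanishes for every $i$ if and only if $b_{ij} = 0$ for all $i$ and all $j$, which gives both implications of the lemma.

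There is no serious obstacle here; the only subtlety is the standing normalization in Assumption~\ref{assump:bperp}, which is what makes the ``only if'' direction clean. Without that assumption, one could only conclude $\bs{b}_i \in \R \mathbf{1}_\No$, i.e., that every agent's raw inputs are equal across options, which is precisely why the lemma is phrased in terms of relative inputs.
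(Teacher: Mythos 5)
Your proof is correct, and it is the intended argument: the paper states Lemma~\ref{lem:zeroinput} without proof, treating it as immediate from $S_1(0)=S_2(0)=0$ and the form of \eqref{EQ:generic decision dynamics}, which is exactly the substitution you carry out. Your closing remark correctly identifies the role of Assumption~\ref{assump:bperp} — without it the equilibrium condition $P_0\bs{b}_i=0$ only forces $\bs{b}_i\in\R\{\mathbf{1}_{\No}\}$, i.e., vanishing \emph{relative} inputs, which is precisely why the lemma is phrased that way.
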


When relative inputs are small, i.e., they do not dominate the dynamics, the formation of opinions in the general model \eqref{EQ:generic decision dynamics} is governed by the balance between the resistance term, which inhibits opinion formation, and the social term, which promotes opinion formation.  For illustrative purposes, consider the case in which $u_i = u \geq 0$ for all $i$. Then for $u$ small, resistance dominates and the system behaves linearly. The opinions $z_{ij}$ remain small and their relative magnitude is determined by the small inputs $b_{ij}$. For $u$ large, the social term dominates and the system behaves nonlinearly. 

\newtext{Importantly, in the nonlinear regime, opinions $z_{ij}$ form that are much larger than, and potentially unrelated to, inputs $b_{ij}$, even for very small initial conditions. Opinion exchanges govern opinion formation through bifurcation mechanisms as discussed in the next section and formalized and investigated in the remainder of the paper.}

\newtext{\subsection{Generality and Connection to Existing Models 
}\label{sec: generalizing specializing}

The model~\eqref{EQ:generic decision dynamics} is general in the sense that it recovers a number of published opinion-formation, decision-making, and consensus models for specific sets of parameters and/or when linearized. In order to illustrate this we consider the model specialized to $\No = 2$, as most of the models in the literature consider two-option scenarios. The opinion state of agent $i$ is one-dimensional: following the notation introduced in Section \ref{sec:properties} we define $x_i = z_{i1} = -z_{i2}$ as agent $i$'s opinion.  Then, opinion dynamics \eqref{EQ:generic decision dynamics} reduce to 
\begin{multline} \label{eq:Nby2}
    \dot{x}_{i} = - d_{i} x_{i} + u_{i} \left( \hat{S}_1 \left( {\alpha_i x_{i} + \textstyle\sum_{\substack{k\neq i\\k=1}}^\Na \gamma_{ik} x_{k}}\right) \right.  \\ 
	\left. - \hat{S}_{2}\left( {\beta_i x_{i} + \textstyle\sum_{\substack{k\neq i\\k=1}}^\Na \delta_{ik} x_{k}}\right)\right)+b_{i} 
\end{multline}
where $\hat{S}_l(y) = \frac{1}{2}(S_l(y) - S_l(-y))$ are odd saturating functions for $l = 1,2$, $b_{i} := b_{i1} = - b_{i2}$, and $d_{i} = \frac{1}{2} (d_{i1} + d_{i2})$. Let the network opinion state be $\mathbf{x} = (x_{1}, \dots, x_{\Na}) \in \mathds{R}^{\Na}$ and vector of inputs be $\mathbf{b} = (b_{1}, \dots, b_{\Na})\in \mathds{R}^{\Na}$. When interactions between option dimensions are disregarded, i.e. with $\beta_{i} = \delta_{ik} = 0$ for all $i,k = 1, \dots, \Na$, the two-option model \eqref{eq:Nby2} further reduces to
\begin{equation}
    \dot{x}_{i} = - d_{i} x_{i} + u_{i} \hat{S}_1 \left( {\alpha_i x_{i} + \textstyle\sum_{\substack{k\neq i\\k=1}}^\Na \gamma_{ik} x_{k}}\right) +b_{i} \label{eq:Nby2_shortened}
\end{equation}
which, with appropriate restrictions on the model coefficients, recovers a number of nonlinear consensus models studied in recent literature. We illustrate this in the following example. 

\begin{example}[Specialization to nonlinear consensus protocols] \label{ex:specialization}
A. When $\alpha_i = 0$, $\gamma_{ik} \in \{0,1\}$ (or more generally, $\gamma_{ik} \geq 0$), $u_i := u \geq 0$, and the resistance parameter $d_i$ is defined as  $d_i := \sum_{k = 1}^{\Na} \gamma_{ik} $ with $k \neq i$ (the network in-degree for node $i$), \eqref{eq:Nby2_shortened} reduces to the nonlinear consensus dynamics of \cite{AF-VS-NEL:15a,Gray2018,fontan2017multiequilibria,Abara2018}. 

B. When $\alpha_i = 0$, $\gamma_{ik} \in \{0,1,-1\}$ (or more generally, $\gamma_{ik} \in \mathds{R}$), $u_i := u \geq 0$, and the resistance parameter $d_i$ is defined as $d_i := \sum_{k = 1}^{\Na} |\gamma_{ik}| $ with $k \neq i$, \eqref{eq:Nby2_shortened} reduces to the nonlinear consensus dynamics with antagonistic interactions studied in \cite{Fontan2018,fontan2021role}. 
\end{example}

In the nonlinear consensus models of Example \ref{ex:specialization}, the formation of consensus opinions on the network is a bifurcation phenomenon. Namely when $b_{i} = 0$ for all $i = 1, \dots, \Na$ and $0 \leq u < u^{*} \leq 1$, the 
neutral point $\mathbf{x} = 0$ is an asymptotically stable equilibrium. At a critical value $u = u^*>0$ a pitchfork bifurcation is observed in both models, at which point $\mathbf{x} = 0$ loses stability and two non-zero asymptotically stable equilibria appear \cite[Theorem 1]{Gray2018}, \cite[Theorem 1]{fontan2021role}. For nonzero inputs, the pitchfork unfolds.

Importantly, the linearization of these models about the origin $\mathbf{x}=0$ at $u = 1$ yields $\dot{\mathbf{x}} = - (D - \Gamma) \mathbf{x}$, where $D= \operatorname{diag}(d_{i}) \in \R^{\Na\times\Na}$ is the degree matrix for the network.
For the positive weights of Example \ref{ex:specialization}.A this corresponds to the standard Laplacian consensus protocol \cite{OlfatiSaber2004}, a continuous-time analogue of the weighted-average models discussed in Section \ref{sec:linear_models}. For the signed weights of Example \ref{ex:specialization}.B this linearization is exactly the model of linear consensus with antagonistic, i.e., signed, interconnections \cite{Altafini2013,Liu2017,shi2019dynamics}, which is sometimes referred to as the ``Altafini" model.

In linear models, nonzero agreement (consensus) and disagreement (e.g., bipartite consensus and its generalizations) equilibria are never exponentially asymptotically stable because the model Jacobian has a zero eigenvalue. The eigenspace of the zero eigenvalue is $\R\{\bs{1}\}$ in the case of agreement, whereas it is spanned by a mixed-sign vector determined by the coupling topology in the case of disagreement~\cite{Altafini2013,Pan2019,meng2016interval,proskurnikov2015opinion}. In other words, linear agreement and disagreement models are not structurally stable and arbitrary small unmodelled (nonlinear) dynamics will in general destroy the predicted behavior. 
Adding saturated opinion exchanges has a two-fold advantage: i) it makes the model generically structurally stable and, therefore, the agreement and disagreement equilibria hyperbolic (i.e., with no eigenvalues on the imaginary axis); ii) it weakens the necessary conditions for the existence of stable disagreement states. In linear models, the existence of neutrally stable agreement or disagreement states is always linked to restrictive and non-generic assumptions on the coupling topology, for example, balanced coupling for consensus~\cite{OlfatiSaber2004} and either strongly connected structurally balanced coupling~\cite{Altafini2013,Pan2019}, quasi-strongly connected coupling with an in-isolated structurally balanced subgraph~\cite{proskurnikov2015opinion}, or the existence of a spanning tree on the coupling graph~\cite{meng2016interval} for disagreement. As we  rigorously show in the next section, in our model agreement is always possible for generic strongly connected (balanced or unbalanced) graphs, whereas disagreement only requires a weak and provable condition on the spectral properties of the adjacency matrix. It follows that our model recovers the behavior of linear models when one of the above conditions is satisfied (Figure~\ref{fig:AltafiniCompare}) but also highlights the conservativeness of linear model predictions under more general coupling topologies (Figure~\ref{fig:AltafiniCompare2}). }

\begin{figure}
    \centering
    \includegraphics[width=0.85\columnwidth]{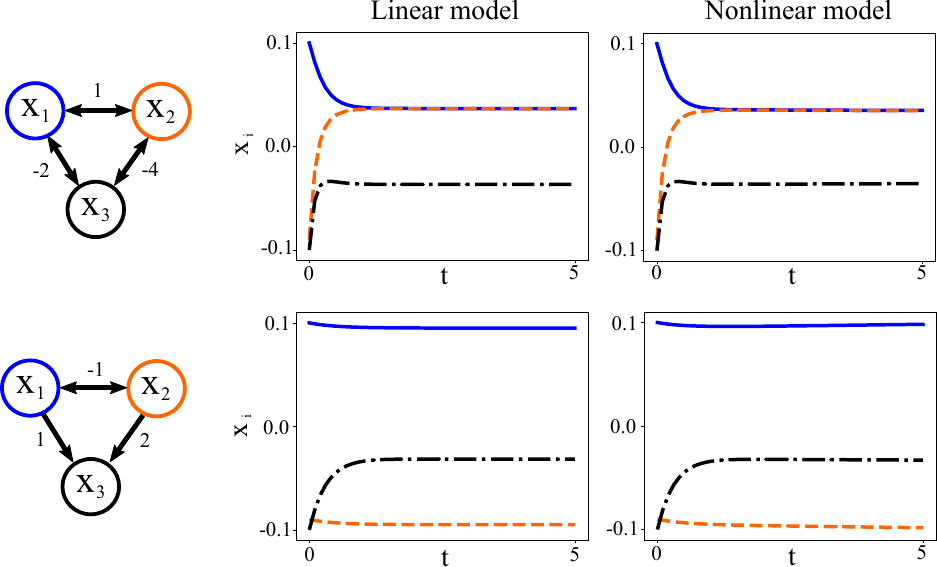}
    \caption{For small initial conditions $\mathbf{x}(0) = (0.1,-0.09,-0.1)$, trajectories of the linear model from \cite{Altafini2013} (left) approximate trajectories of the nonlinear model of Example \ref{ex:specialization}.B (right) with $\Na=3$, $u = 1.01$, $b_{i} = 0$, and $\hat{S}_{1} = \tanh$. Top: bipartite consensus on a strongly connected and structurally balanced graph, as in \cite[Example 1]{Altafini2013}. \newtext{Bottom: polarized opinions on a quasi-strongly connected graph containing an in-isolated structurally balanced subgraph, as in  \cite[Example 1]{proskurnikov2015opinion}.}  } 
    \label{fig:AltafiniCompare}
\end{figure}

\begin{figure}
    \centering
    \includegraphics[width=0.85\columnwidth]{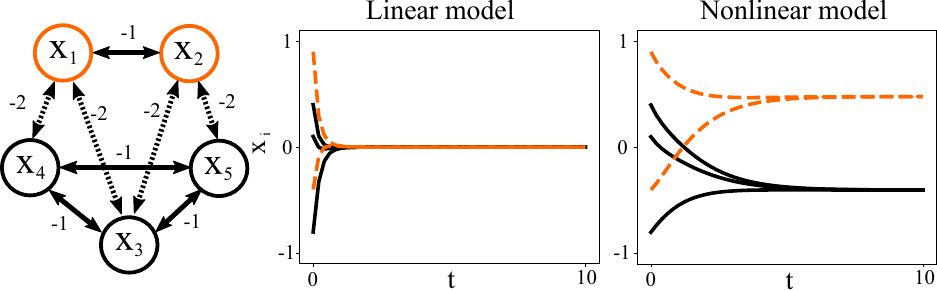}
    \caption{The model from \cite{Altafini2013} (left) and nonlinear model  (\ref{eq:Nby2_shortened}) (right) with $\Na=5$, $u_i = 0.5$, $d_i = 1$, $b_{i} = 0$ for all $i = 1, \dots, \Na$, $\hat{S}_{1} = \tanh$, initial conditions $\mathbf{x}(0) = (0.9,-0.4,0.4,0.1,-0.8)$, and  same adjacency matrix given by $\gamma_{ik} = -1$ for $i,k \in \mathcal{I}_{p}$, $i\neq k$, and $\gamma_{ik} = -2$ for $i \in \mathcal{I}_{p}$, $ k\in \mathcal{I}_{s}$, $p \neq s$ for clusters with indices $\mathcal{I}_{1} = \{1,2\}$ and  $\mathcal{I}_{2} = \{3,4,5\}$ - \newtext{ see network diagram  for  illustration of the interconnection topology.} The linear model converges to the neutral solution. 
   The nonlinear model, however, converges to a stable clustered dissensus state, as follows from Remark \ref{rem:clustered_bif}. }
    \label{fig:AltafiniCompare2}
\end{figure}

\subsection{Clustering and Model Reduction}\label{sec: clustering}
\newtext{
The opinion states $\Zz_i$ of the model \eqref{EQ:generic decision dynamics} can either represent individual agents or alternatively the average opinion of a subgroup. The latter perspective can be advantageous, for example, in designing methodology for robotic swarm activities where subgroups of robots needs to make consensus decisions, 
 in studying cognitive control where the behavior of competing subpopulations of neurons determines task switching \cite{MusslickBizyaevaCogSci},  
and in modeling and investigating mechanisms that explain sociopolitical processes such as political polarization \cite{Leonard2021}. In this section we prove a sufficient condition for \textit{cluster synchronization} of the opinions on the network with the opinion dynamics \eqref{EQ:generic decision dynamics}, in which the network trajectories converge to a lower-dimensional manifold on which agents within each cluster have identical opinions. 

The cluster synchronization problem has been extensively studied in dynamical systems with diffusive coupling, as in \cite{pogromsky2002partial, xia2011clustering}. More broadly, cluster synchronization has been linked to graph symmetries and graph structure called \textit{external equitable partitions} \cite{stewart2003symmetry,o2013observability,schaub2016graph,sorrentino2016complete,gambuzza2019criterion}. In the following theorem we show that such a network structure constitutes a sufficient condition for a network of agents to form opinion clusters -- see Appendix~\ref{app:clusters} for the proof. }

\begin{theorem}[Model Reduction with Opinion Clusters]\label{thm:clusters}
Consider $N_c$ clusters with $N_p$ agents in cluster $p$ such that $\sum_{p = 1}^{N_{c}} N_p = \Na$. Let $\mathcal{I}_{p}$ be the set of indices for agents in cluster $p$. Assume for every  $p = 1, \dots, N_{c}$: 1) $u_{i} = \hat u_{p}$, $d_{i} = d_{p}$, $b_{ij} = b_{pj}$ for $i \in \mathcal{I}_{p}$; 2) within a cluster $\alpha_i = \bar \alpha_p$, $\gamma_{ik} = \tilde \alpha_p$, $\beta_i =\bar \beta_p$, $\delta_{ik} = \tilde \beta_p $ for $i,k \in \mathcal{I}_{p}$, and $i \neq k$; 3) between clusters $\gamma_{ik} = \tilde \gamma_{ps}$, $\delta_{ik} = \tilde \delta_{ps}$   for $i \in \mathcal{I}_{p}$, $k \in \mathcal{I}_{s}$ $s = 1, \dots, N_{c}$ and $s \neq p$. Define bounded set $K_q\subset \mathds{R}^{>0}$, $q=1,2$, as the image of the derivative of the saturating function $S'_q$ of \eqref{EQ:generic decision dynamics}. If the following condition holds: 
\begin{equation}\label{eq:clustering_cond}
    \sup_{\kappa_{1} \in K_{1}, \kappa_{2} \in K_{2}}\Big\{- d_{p}  + u_{p} \kappa_{1} ( \bar \alpha_p - \tilde \alpha_p )  
    + u_{p} \kappa_{2} (\bar\beta_p - \tilde \beta_p  ) \Big\} < 0,
\end{equation}
for all $p = 1, \dots, N_{c}$, then every trajectory of \eqref{EQ:generic decision dynamics} 
converges exponentially to the $N_{c}(\No-1)$-dimensional manifold
\begin{equation}
    \mathcal{E} = \{ \Zz \in V \ | \ z_{ij} = z_{kj} \ \ \forall i,k \in \mathcal{I}_{p}, \ p = 1, \dots, N_{c} \}. \label{eq:ClusteredManifold}
\end{equation}
The dynamics on $\mathcal{E}$ reduce to (\ref{EQ:generic decision dynamics}) with $N_{c}$ agents with opinion states $\hat{z}_{pj}$, $p = 1, \dots, N_{c}$, and with coupling 
weights 
\begin{subequations}\label{eq:cluster_weights}
\begin{align}
    \hat{\alpha}_{p} &= \bar{\alpha}_{p} + (N_{p} - 1) \tilde \alpha_p, \ \ &\hat{\gamma}_{ps} = N_{s} \tilde \gamma_{ps}, \\
    \hat{\beta}_{p} &= \bar{\beta}_{p} + (N_{p} - 1) \tilde \beta_{p}, \ \ &\hat{\delta}_{ps} = N_{s} \tilde \delta_{ps}.
    \end{align}
\end{subequations}
\end{theorem}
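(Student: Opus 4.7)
My plan would be to prove the three assertions of the theorem in sequence: flow-invariance of $\mathcal{E}$, derivation of the reduced dynamics on $\mathcal{E}$, and exponential convergence to $\mathcal{E}$.

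I would first verify flow-invariance. Pick $\Zz \in \mathcal{E}$ and any pair $i,k \in \mathcal{I}_{p}$ in the same cluster. Using $z_{ij}=z_{kj}$ together with Assumptions 1--3, the arguments of $S_1$ in $F_{ij}$ and $F_{kj}$ differ only through the term $(\bar\alpha_p - \tilde\alpha_p)(z_{ij}-z_{kj})$ (after rewriting the within-cluster sum as $\tilde\alpha_p(S_{pj}-z_{ij})$, where $S_{pj}=\sum_{m\in\mathcal{I}_p}z_{mj}$), and this vanishes on $\mathcal{E}$; the analogous statement holds for every $S_2$ argument via $(\bar\beta_p - \tilde\beta_p)(z_{il}-z_{kl})$. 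Since $d_i, u_i, b_{ij}$ also agree across the cluster, $\mathbf{F}_i = \mathbf{F}_k$ on $\mathcal{E}$, hence $P_0\mathbf{F}_i = P_0\mathbf{F}_k$, and the equality $z_{ij}=z_{kj}$ is preserved in time.

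For the reduced dynamics on $\mathcal{E}$, for any $i\in\mathcal{I}_p$ I would split $\sum_{k\neq i}\gamma_{ik}z_{kj}$ into within- and between-cluster contributions. On $\mathcal{E}$ this equals $(N_p-1)\tilde\alpha_p\,\hat z_{pj} + \sum_{s\neq p} N_s\tilde\gamma_{ps}\hat z_{sj}$, and adding the self-coupling $\alpha_i z_{ij} = \bar\alpha_p \hat z_{pj}$ gives exactly $\hat\alpha_p \hat z_{pj} + \sum_{s\neq p}\hat\gamma_{ps}\hat z_{sj}$ with the weights in \eqref{eq:cluster_weights}. The identical bookkeeping applied to each $S_2$ argument produces $\hat\beta_p$ and $\hat\delta_{ps}$. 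Therefore the restriction of \eqref{EQ:generic decision dynamics} to $\mathcal{E}$ is itself an $N_c$-agent instance of \eqref{EQ:generic decision dynamics} on the variables $\hat z_{pj}$.

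For exponential convergence I would, for each cluster $p$, fix an anchor index $i^{*}_p\in\mathcal{I}_p$, and define the deviation vectors $\mathbf{e}_i = \Zz_i - \Zz_{i^{*}_p}\in\mathbf{1}_{\No}^{\perp}$ for $i\in\mathcal{I}_p\setminus\{i^{*}_p\}$. Let $V = \tfrac12\sum_p\sum_{i\in\mathcal{I}_p\setminus\{i^{*}_p\}}\|\mathbf{e}_i\|^2$. Because $P_0\mathbf{e}_i=\mathbf{e}_i$, I can drop the projection in $\mathbf{e}_i^{\top}\dot{\mathbf{e}}_i$. Applying the mean value theorem to each $S_1$ and $S_2$ difference and using Assumptions 2--3, the differences in arguments collapse to $(\bar\alpha_p-\tilde\alpha_p)e_{i,j}$ and $(\bar\beta_p-\tilde\beta_p)e_{i,l}$ at mean-value points $\xi_j, \eta_l$ whose derivative values lie in $K_1, K_2$ respectively. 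The cross-option double sum $\sum_j e_{i,j}\sum_{l\neq j}S_2'(\eta_l)e_{i,l}$ is then rewritten as $(\sum_j e_{i,j})(\sum_l S_2'(\eta_l)e_{i,l}) - \sum_j S_2'(\eta_j)e_{i,j}^2$ and, using the zero-sum identity $\sum_j e_{i,j}=0$, collapses to a diagonal form. Assembling terms, $\dot V$ becomes a sum of $e_{i,j}^2$ with coefficients of the form $-d_p + u_p S_1'(\xi_j)(\bar\alpha_p-\tilde\alpha_p) + u_p S_2'(\eta_j)(\bar\beta_p-\tilde\beta_p)$ (up to a sign absorbed in the MVT evaluation points); pointwise upper bounding over $K_1\times K_2$ yields $\dot V \le -2\lambda V$ with $\lambda>0$ by \eqref{eq:clustering_cond}, proving exponential decay of the deviations.

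The main obstacle will be the cross-option coupling term in $S_2$. Because the mean-value evaluation points $\eta_l$ differ across $l$, that off-diagonal sum is not obviously sign-definite, and a naive Cauchy--Schwarz or spectral-norm bound would give a condition strictly stronger (and network-size dependent) than \eqref{eq:clustering_cond}. The key insight is that the state constraint $\Zz_i\in\mathbf{1}_{\No}^{\perp}$, inherited by $\mathbf{e}_i$, converts the off-diagonal form into a diagonal one, making \eqref{eq:clustering_cond} the precise and tight robustness criterion for cluster contraction.
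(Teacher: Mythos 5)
Your proposal is correct and follows essentially the same route as the paper's proof in Appendix C: a quadratic Lyapunov function measuring within-cluster disagreement, the mean value theorem to replace sigmoid differences by factors $\kappa_1\in K_1$, $\kappa_2\in K_2$, and the zero-sum constraint $\sum_{j}z_{ij}=0$ to eliminate the cross-option and projection contributions, yielding contraction at the rate given by \eqref{eq:clustering_cond} and the reduced weights \eqref{eq:cluster_weights} by the same within-/between-cluster bookkeeping. The only differences are cosmetic --- you use anchored deviations where the paper uses the all-pairs sum $V_p=\tfrac12\sum_{i,k\in\mathcal{I}_p}\sum_{j}(z_{ij}-z_{kj})^2$ --- and your explicit diagonalization of the $S_2$ double sum makes precise a step the paper's proof leaves implicit.
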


Whenever conditions of Theorem \ref{thm:clusters} are met, the group of $\Na$ agents will converge to a clustered group opinion state.  This can happen for a broad class of interaction networks 
including an all-to-all network with interaction weights that all have the same sign. The sufficient condition can, for example, inform network design for technological systems where several groups of units must make collaborative decisions. See Figure~\ref{fig:AltafiniCompare2} for an illustration of opinion trajectories with two clusters, membership in which is defined by the network structure. 

\newtext{
\subsection{A Minimal Opinion Network Model}
\label{SSEC: minimal model}

Several of the results characterizing opinion formation in ~\eqref{EQ:generic decision dynamics} will be proved in the {\it homogeneous regime} defined by}
\begin{align}\label{eq: homogeneous params}
b_{ij}&=0,\ d_{i} = d >0,\ u_{i} = u\geq 0,\ \alpha_i = \alpha \in \R\nonumber\\
\beta_i &= \beta \in \R,\ \gamma_{ik} = \gamma a_{ik},\ \delta_{ik} = a_{ik}, \ A = [a_{ik}]
\end{align}
where $\alpha,\beta,\gamma, \delta \in \R$ and $a_{ik} \in \{ 0, 1 \}$, $a_{ii} = 0$ for all $i,k = 1,\ldots,\Na$, $k \neq i$, so that $A$ is an unweighted adjacency matrix without self-loops.

\newtext{With this choice of parameters, the nonlinear model is minimal in the following sense. The matrix $A$ with elements $a_{ik}$ defines the influence network topology. The set of four interactions gains $\alpha,\beta,\gamma,\delta$ is minimal because in general there are four distinct types of arrows in a multi-option opinion network. Removing one of these four parameters will in general compromise the existence of possibly crucial opinion-formation behaviors. The (global) attention parameter $u$ and 
resistance parameter $d$ tune an agent's attention to other agents' opinions and they jointly determine the occurrence of opinion-formation bifurcations, as we  prove in Section \ref{sec:agree_disagree}.

We show that our model, even in the fully homogeneous regime, exhibits extremely rich and analytically provable opinion-formation behaviors. We further  build upon the results proved for the homogeneous model to study, either analytically or numerically, the effects of heterogeneity and perturbations.
}

\section{Agreement and disagreement opinion formation \label{sec:agree_disagree}}

In this section, we show the following key results on opinion formation for dynamics~\eqref{EQ:generic decision dynamics}.

\begin{enumerate}
\item Opinion formation can be modeled as a bifurcation, an intrinsically nonlinear dynamical phenomenon. \newtext{Opinions form rapidly through bifurcation-induced instabilities rather than slow linear integration of evidence.} Opinions can form even in the absence of input, as long as attention (urgency or susceptibility, etc.) is sufficiently high.
\item \newtext{The way opinions form at a bifurcation depends on the  eigenstructure of the matrix $\Gamma-\Delta$ defined by \eqref{eq: gamma delta matrices}.
\item In the homogenous regime defined by \eqref{eq: homogeneous params}, cooperative agents ($\gamma>\delta$) always form agreement opinions, whereas under suitable assumptions on the eigenstructure of the adjacency matrix $A$ competitive agents ($\gamma<\delta$) always form disagreement opinions.}
\item At the bifurcation, there are multiple stable solutions, and opinion formation breaks deadlock, \newtext{that is, the situation in which all agents remains neutral, and therefore undecided, about all the options.}
\item Near the bifurcation, opinion formation is ultra-sensitive to input. \newtext{
\item Away from the bifurcation, opinion formation is robust to small heterogeneity in parameter values and small inputs.  
\item  In the absence of inputs, multistable agreement solutions  {\it and} multistable  disagreement solutions emerge generically at opinion-forming bifurcations.
\item In the presence of inputs, the opinion-forming bifurcation unfolds (i.e., multistability is broken) in a such a way that the opinion states favored by inputs attract most of the initial conditions close to the bifurcation. The network structure governs the relative influence of inputs, which leads to a formal notion of centrality indices for agreement and disagreement.}
\item \newtext{ Agreement and disagreement} can {\it co-exist}, revealing the possibility of easy transition between agreement and disagreement.
\item \newtext{With sufficient symmetry, agreement specializes to consensus and disagreement to dissensus.}
\end{enumerate}
\newtext{
\subsection{Agreement and Disagreement 
States\label{SSEC: agreement disagreement states}}

In this section we clarify what it means for agents in a group to agree and disagree. We say  the agents \textit{agree}, i.e., are in an {\em agreement state}, when $\operatorname{sign}(z_{ij}) = \operatorname{sign}(z_{kj})$ for all $i,k = 1, \dots, \Na$, $j = 1, \dots, \No$. This means that all agents unanimously favor or disfavor each of the options, although they may differ on the magnitude of their opinions. 
Agreement specializes to \textit{consensus} when $\Zz_i = \Zz_k$ for all $i,k = 1, \dots, \Na$. 
We say the agents \textit{disagree}, i.e., are in a {\em disagreement state}, when $\operatorname{sign}(z_{ij}) \neq \operatorname{sign}(z_{kj})$ for at least one pair of agents $i,k = 1, \dots, \Na$, $i \neq k$, and at least one option $j$.  Disagreement specializes to \textit{dissensus}  when the average opinion of the group is neutral, i.e., $\sum_{i=1}^{\Na} \Zz_i = 0$. 

}



\newtext{
\begin{remark}\label{RMK: threshold}
    In the presence of nonzero inputs $b_{ij}$,  agents will generically have nonzero opinions about options as follows from Lemma \ref{lem:zeroinput}. For realistic applications, small opinions formed in a linear response to inputs should be distinguished from large opinions which arise from a nonlinear response. To make this distinction we  say agents are {\bf opinionated} when their opinions are large, and {\bf unopinionated} when their opinions are close to zero. In this paper we keep this distinction qualitative. A precise bound between opinionated and unopinionated  magnitudes depends on the application and can be defined when necessary. 
\end{remark}
}

\newtext{
\subsection{Opinions Form through a Bifurcation\label{sec:bifurcations}}

In this section we prove how 
steady-state bifurcations of the opinion dynamics \eqref{EQ:generic decision dynamics} result in nonzero opinions on the network.
The following theorem, proved in Appendix~\ref{app:mainTheorem}, provides sufficient conditions under which 
{opinions form through} a bifurcation from the neutral equilibrium $\Zz=0$ and  formulas to compute the kernel along which the bifurcation appears. 

\begin{theorem}[Opinion Formation as a Bifurcation] \label{thm:Bifurcations}
Consider model~(\ref{EQ:generic decision dynamics}) with $b_{ij} = 0$, $d_{i} = d$, $u_i = u$, $\alpha_i = \alpha$, and $\beta_i = \beta$, for all $i = 1, \dots, \Na$.
Let $J$ be the Jacobian of the system evaluated at neutral equilibrium $\Zz = 0$.   Define $\lambda$ to be the eigenvalue of $\Gamma - \Delta$ with largest real part, with $\Gamma$ and $\Delta$ from~\eqref{eq: gamma delta matrices}. Assume that $\lambda$ is real, $\alpha-\beta+\lambda > 0$, and that $\operatorname{Re}[\mu] \neq \lambda$ for any eigenvalue $\mu \neq \lambda$ of $\Gamma - \Delta$. Then $\Zz = 0$ is locally exponentially stable for $0 < u < u^*$, with 
\begin{equation}
    u^* = \frac{d}{\alpha - \beta + \lambda},
\end{equation}
and unstable for $u > u^*$. If $\lambda$ is simple\footnote{\newtext{This result can be generalized to networks for which $\lambda$ is not simple, which we leave for future work.} }, at $u = u^*$ an opinion-forming steady-state bifurcation happens along 
$\ker J=\R\{\mathbf{v}^* \} \otimes \mathbf{1}_{\No}^{\perp}$ where $\mathbf{v}^*$ is the right unit eigenvector associated to $\lambda$. More precisely, generically, for each bifurcation branch there exists $\mathbf{v}_{ax}\in\mathbf{1}_{\No}^{\perp}$ such that the branch is tangent at $\Zz=0$ to the one-dimensional subspace $\mathds{R} \{ \mathbf{v}^* \otimes \mathbf{v}_{ax} \}$.
\end{theorem}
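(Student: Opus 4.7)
The plan is to (i) compute the Jacobian $J$ of~\eqref{EQ:generic decision dynamics} at $\Zz=0$ in closed form, (ii) read off the stability dichotomy and the structure of $\ker J$ at $u=u^*$, and (iii) determine the branch directions using the $S_{\No}$-equivariance induced by the symmetric treatment of options.

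For step (i), since $S_q(0)=0$ and $S_q'(0)=1$, the linearization of $F_{ij}$ at the origin is \[-d z_{ij} + u\Big[\alpha z_{ij} + \textstyle\sum_{k\neq i}\gamma_{ik}z_{kj} + \sum_{l\neq j}\big(\beta z_{il} + \sum_{k\neq i}\delta_{ik}z_{kl}\big)\Big].\] The mutual-exclusivity constraint $\Zz_k\in\mathbf{1}_{\No}^{\perp}$ gives $\sum_{l\neq j}z_{kl} = -z_{kj}$, which collapses the inter-option sum and produces the option-decoupled linear system $\dot\Zz^{(j)} = M\Zz^{(j)}$ for each option $j$, where $\Zz^{(j)} = (z_{1j},\ldots,z_{\Na j})^T$ and $M = -d\,\mathcal{I}_{\Na} + u[(\alpha-\beta)\mathcal{I}_{\Na} + (\Gamma-\Delta)]$. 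The projection $P_0$ acts as the identity on the invariant subspace $V$, so on $V$ the Jacobian $J$ has the same spectrum as $M$ replicated $\No-1$ times: each eigenpair $(\xi,\mathbf{v})$ of $\Gamma-\Delta$ yields an eigenvalue $\mu(\xi) = -d + u(\alpha-\beta+\xi)$ of $J|_V$ with eigenspace $\R\{\mathbf{v}\}\otimes\mathbf{1}_{\No}^{\perp}$.

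Step (ii) follows immediately. By the gap hypothesis, $\mu(\lambda) = -d+u(\alpha-\beta+\lambda)$ strictly dominates the real part of the remaining spectrum of $J|_V$, so stability of $\Zz=0$ switches at $u^{*} = d/(\alpha-\beta+\lambda)$; simplicity of $\lambda$ then gives $\ker J = \R\{\mathbf{v}^*\}\otimes\mathbf{1}_{\No}^{\perp}$ at $u=u^{*}$, of dimension $\No-1$. For step (iii), observe that~\eqref{EQ:generic decision dynamics} is equivariant under the natural action of the symmetric group $S_{\No}$ permuting option indices: $S_2$ is applied component-wise and all options enter symmetrically in the inter-option sum, so a direct substitution gives $F(\sigma \Zz) = \sigma F(\Zz)$ for every $\sigma \in S_{\No}$, and $P_0$ commutes with this action. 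A Lyapunov--Schmidt reduction onto the center subspace $\R\{\mathbf{v}^*\}\otimes\mathbf{1}_{\No}^{\perp}\cong\mathbf{1}_{\No}^{\perp}$ then yields a reduced bifurcation equation that is $S_{\No}$-equivariant with respect to the standard (absolutely irreducible) representation of $S_{\No}$ on $\mathbf{1}_{\No}^{\perp}$. The Equivariant Branching Lemma of Golubitsky--Stewart--Schaeffer guarantees, generically, that each bifurcating steady-state branch is tangent at $\Zz=0$ to a one-dimensional fixed-point subspace $\R\{\mathbf{v}_{ax}\}\subset\mathbf{1}_{\No}^{\perp}$ of an axial isotropy subgroup of $S_{\No}$, which pulls back to the tangent direction $\R\{\mathbf{v}^*\otimes\mathbf{v}_{ax}\}$ in $V$.

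The main obstacle is the branch-direction claim: because $\ker J$ is $(\No-1)$-dimensional, linear analysis alone does not single out specific branches, and identifying the tensor-product form requires both the equivariant reduction and verification that the reduced vector field is non-degenerate in the equivariant sense---which is generic thanks to $S_q''(0)\neq 0$ and $S_q'''(0)\neq 0$. By contrast, the Jacobian computation and the stability/kernel statements are routine once the constraint $\sum_l z_{kl}=0$ has been used to eliminate the inter-option couplings in the linearization.
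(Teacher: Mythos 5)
Your proposal is correct and follows essentially the same route as the paper's proof: the Jacobian at $\Zz=0$ is computed as $\left((-d+u(\alpha-\beta))\mathcal{I}_{\Na}+u(\Gamma-\Delta)\right)\otimes P_0$ (your option-decoupled form is just this Kronecker structure restricted to $V$, where $P_0$ acts as the identity), the stability threshold and kernel are read off from the spectrum of $\Gamma-\Delta$, and the branch directions follow from the Equivariant Branching Lemma applied to the $\ES_{\No}$-action on $\ker J$. The extra detail you supply (collapsing the inter-option sum via $\sum_{l\neq j}z_{kl}=-z_{kj}$ and spelling out the Lyapunov--Schmidt reduction and absolute irreducibility) is consistent with the paper's argument and with Remark~\ref{rmk:axial v}.
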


\begin{remark}\label{rmk:axial v}
    The vector $\mathbf{v}_{ax}$ can be computed as the generator of the fixed-point subspace of an {\em axial subgroup} \cite[Section~1.4]{GolubitskySymmetryPerspective} of the (irreducible) action of $\ES_{\No}$ on $\ker J$.
\end{remark}

Theorem~\ref{thm:Bifurcations} reveals how nonzero opinions can form even without input: opinions form when attention $u$ is greater than threshold $u^*$. This means that deadlock can be avoided even when there is little or no evidence to distinguish among options. The value of the threshold is determined from the structure of the communication network. Additionally, from this result we can deduce how agreement and disagreement solutions are informed by the network structure. In particular,  the equilibrium opinions of each agent near the bifurcation are directly proportional to the vector $\mathbf{v}_{ax}$, scaled by the entries of $\mathbf{v}^*$. When all of the entries of $\mathbf{v}^*$ have the same sign, the agents will be in an agreement state. 
If $\mathbf{v}^*$  contains mixed-sign entries, the agents will necessarily be in a  disagreement state. This provides a straightforward connection between the spectral properties of the effective inter-agent communication graph $\Gamma - \Delta$ and the opinion configurations which arise from the opinion dynamics \eqref{EQ:generic decision dynamics}. On the other hand, the entries of the vector $\mathbf{v}_{ax}$ determine the relative preference associated to the various options. In the following corollary we show how in the homogeneous regime \eqref{eq: homogeneous params} Theorem~\ref{thm:Bifurcations} specializes to simple conditions for agreement and disagreement. 

\begin{corollary}[Agreement and Disagreement] \label{cor:AgreeDisagree}
Consider model~(\ref{EQ:generic decision dynamics}) with homogeneous parameters as in~\eqref{eq: homogeneous params} on a strongly connected graph.  Let $\lambda_{max}>0$ be the largest real-part eigenvalue of $A$, i.e. the Perron-Frobenius eigenvalue, with associated positive eigenvector $\mathbf{v}_{max}$. Let $\lambda_{min}<0$ be the smallest real-part eigenvalue of $A$. Assume $\lambda_{min}$ is real, simple, and for all eigenvalues $\xi \neq \lambda_{min}$ of $A$, $\operatorname{Re}[\xi] \neq \lambda_{min}$.

\noindent A. \textbf{Cooperative agents.} Suppose that $\gamma-\delta>0$ and that $\alpha - \beta + \lambda_{max}(\gamma - \delta)>0$.  Then the critical value of attention at which the steady-state bifurcation predicted by Theorem~\ref{thm:Bifurcations} happens is given by
\begin{equation}
    u^* := u_a = \frac{d}{\alpha - \beta + \lambda_{max}(\gamma - \delta)} \label{eq:uc}
\end{equation}
and close to bifurcation all the bifurcation branches are made of agreement solutions. 

\noindent B. \textbf{Competitive agents.} Suppose $\gamma-\delta<0$ and that $\alpha - \beta + \lambda_{min}(\gamma - \delta) > 0$. Then the critical value of attention at which the steady-state bifurcation predicted by Theorem~\ref{thm:Bifurcations} happens is given by
\begin{equation}
    u^* := u_d = \frac{d}{\alpha - \beta + \lambda_{min}(\gamma - \delta)}. \label{eq:ud}
\end{equation}
Moreover, whenever $\mathbf{v}_{min}$, the eigenvector associated to $\lambda_{min}$, has mixed-sign entries, close to bifurcation all the bifurcation branches are made of disagreement solutions. 
\end{corollary}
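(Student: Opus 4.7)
The plan is to derive the corollary as a direct specialization of Theorem~\ref{thm:Bifurcations} after identifying the extremal eigenvalue of $\Gamma-\Delta$ in the homogeneous regime~\eqref{eq: homogeneous params}. In that regime $\Gamma-\Delta=(\gamma-\delta)A$, so the spectrum of $\Gamma-\Delta$ is obtained from that of $A$ by scaling with $(\gamma-\delta)$, with multiplicities and right eigenvectors preserved. This scaling preserves the real-part ordering when $\gamma-\delta>0$ and reverses it when $\gamma-\delta<0$, so the eigenvalue $\lambda$ of $\Gamma-\Delta$ with largest real part, as needed for Theorem~\ref{thm:Bifurcations}, is $(\gamma-\delta)\lambda_{max}$ in Case A and $(\gamma-\delta)\lambda_{min}$ in Case B. In both cases, simplicity of $\lambda$ and strict real-part separation from the rest of the spectrum of $\Gamma-\Delta$ transfer directly from the corresponding hypotheses on $A$: in Case A from the Perron--Frobenius theorem applied to the adjacency matrix of a strongly connected graph, and in Case B from the explicit assumptions placed on $\lambda_{min}$.

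In Case A I will substitute $\lambda=(\gamma-\delta)\lambda_{max}$ into the formula for $u^*$ of Theorem~\ref{thm:Bifurcations} to obtain exactly~\eqref{eq:uc}, noting that the hypothesis $\alpha-\beta+\lambda>0$ becomes the assumed $\alpha-\beta+\lambda_{max}(\gamma-\delta)>0$. Perron--Frobenius further guarantees that the right eigenvector $\mathbf{v}^*=\mathbf{v}_{max}$ has all strictly positive entries. Theorem~\ref{thm:Bifurcations} asserts that each bifurcating branch is tangent at $\Zz=0$ to $\mathds{R}\{\mathbf{v}^*\otimes \mathbf{v}_{ax}\}$, so close to the bifurcation $z_{ij}\approx\epsilon\, v^*_i v_{ax,j}$. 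Since every $v^*_i$ has the same (positive) sign, $\mathrm{sign}(z_{ij})$ depends only on $\epsilon$ and on $v_{ax,j}$, and is thus independent of the agent index $i$, which is precisely the definition of agreement given in Section~\ref{SSEC: agreement disagreement states}.

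Case B is analogous. With $\gamma-\delta<0$, the largest real-part eigenvalue of $\Gamma-\Delta$ is $(\gamma-\delta)\lambda_{min}$, giving~\eqref{eq:ud} when substituted into Theorem~\ref{thm:Bifurcations}, and the associated right eigenvector is $\mathbf{v}^*=\mathbf{v}_{min}$. Using again the tangent representation $z_{ij}\approx\epsilon v^*_i v_{ax,j}$, if $\mathbf{v}_{min}$ has mixed-sign entries then there exist agents $i,k$ with $v^*_i v^*_k<0$, which forces $\mathrm{sign}(z_{ij})\neq\mathrm{sign}(z_{kj})$ for every option $j$ with $v_{ax,j}\neq 0$; at least one such $j$ exists because $\mathbf{v}_{ax}\in\mathbf{1}_{\No}^{\perp}$ is nonzero. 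Every branch therefore consists of disagreement states. The only genuine subtlety in the argument is verifying that the non-degeneracy requirements placed on $\lambda$ in Theorem~\ref{thm:Bifurcations} (reality, simplicity, strict real-part dominance) are properly inherited from the spectral hypotheses on $A$; this is routine once one observes that multiplication by the nonzero scalar $\gamma-\delta$ acts on the spectrum of $A$ as a real dilation and, in Case B, also as a sign flip, and thus maps the extremal (real, simple, isolated) eigenvalue of $A$ to an eigenvalue of $\Gamma-\Delta$ with the same properties.
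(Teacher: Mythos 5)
Your proposal is correct and follows essentially the same route as the paper, which presents this corollary as a direct specialization of Theorem~\ref{thm:Bifurcations} using the observation that in the homogeneous regime $\Gamma-\Delta=(\gamma-\delta)A$, with Perron--Frobenius supplying positivity of $\mathbf{v}_{max}$ in the cooperative case and the sign flip under $\gamma-\delta<0$ selecting $\lambda_{min}$ in the competitive case. The agreement/disagreement classification via the tangent direction $\mathds{R}\{\mathbf{v}^*\otimes\mathbf{v}_{ax}\}$ is exactly the argument the paper gives in the discussion preceding the corollary.
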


We emphasize that the assumption about eigenvalues $\lambda$ of Theorem \ref{thm:Bifurcations} and $\lambda_{min}$ of Corollary \ref{cor:AgreeDisagree} being simple often holds, and can be easily verified numerically for various graph structures. Furthermore, the eignevector $\mathbf{v}_{min}$ of Corollary~\ref{cor:AgreeDisagree} typically has mixed-sign entries, and competition between agents therefore tends to result in network disagreement. For example, on undirected networks $\mathbf{v}_{min}$ always has mixed-sign entries since $\mathbf{v}_{max}$ is the positive Perron-Frobenius eigenvector and $\langle \mathbf{v}_{max}, \mathbf{v}_{min} \rangle = 0$. For example see Figure~\ref{fig:GraphClasses} for patterns of agreement and disagreement solutions for $\No = 2$ and several representative undirected graphs.}

An important feature of the opinion dynamics \eqref{EQ:generic decision dynamics} is the multistability of opinion configurations at the bifurcations described by Theorem~\ref{thm:Bifurcations} and Corollary~\ref{cor:AgreeDisagree}.  When agents cooperate and $\ker J$ is made of agreement vectors, if agreement in favor of one option is stable then agreement in favor of each other option is stable, and likewise for  disagreement solutions. There is a deadlock when $u<u_c$ ($u< u_d$) and breaking of deadlock when $u > u_c$ ($u  > u_d$).

At the bifurcation the linearization is singular, and the model is ultra-sensitive at  transition from neutral to opinionated. Even infinitesimal perturbations (e.g., tiny difference in option values) are sufficient to destroy multistability at bifurcation by selecting a subset of  stable equilibria (e.g., those corresponding to higher-valued options), a phenomenon known as forced-symmetry breaking and widely exploited in nonlinear decision-making model~\cite{Pais2013,Reina2017,Gray2018}.

Generically, stable equilibria that appear at the bifurcation are hyperbolic, and thus they and their basin of attraction are robust to perturbations, a key property that ensures stability of opinion formation despite (sufficiently small) changes in inputs,  heterogeneity in parameters, and perturbations in the communication network. Robustness bounds 
can be derived using  methods like those used for Hopfield networks in \cite{wang1994}. Robust multistability of  equilibria gives the opinion-forming process hysteresis, and thus memory, between  different opinion states: once an opinion is formed in favor of an option, a large change in the inputs is necessary for a switch.

\begin{remark} \label{rem:clustered_bif}
    Under the clustering conditions of Theorem~\ref{thm:clusters}, we can apply Theorem~\ref{thm:Bifurcations} and Corollary~\ref{cor:AgreeDisagree} with $N_c$ agents and coupling parameters defined by~\eqref{eq:cluster_weights}.
\end{remark}

\begin{remark}[Mode Interaction and Coexistence of Agreement and Diagreement] \label{rem:mode}
    When $\gamma = \delta$, there is {\em \textbf{mode interaction}} \cite{Golubitsky1985}, and agreement and disagreement bifurcations appear at the same critical value of $u$.
    This regime is especially interesting
    because it allows for co-existence of stable agreement and disagreement solutions,
    which can result in agents easily transitioning between the two in response to changing conditions. However, additional primary solution branches not captured by the analysis presented here can appear in this regime, and we leave exploring this parameter regime more thoroughly to future work.
\end{remark}

\newtext{
\subsection{Patterns of Opinion Formation for Two Options}\label{sec: patterns of opinions}

In this section we examine 
the ultra-sensitivity of the network opinion dynamics to inputs or biases of individual agents when operating near its bifurcation point. We consider the two-option opinion dynamics \eqref{eq:Nby2_shortened} with homogeneous parameters \eqref{eq: homogeneous params}, relaxing the  assumption of zero inputs: 
\begin{multline}
    \dot{x}_{i} = - d x_{i} + u \hat{S}_1 \left( {\alpha x_{i} + \gamma \textstyle\sum_{\substack{k\neq i\\k=1}}^\Na a_{ik} x_{k}}\right) + b_{i} \label{eq:Nby2_hom}.
\end{multline}

The next corollary follows from Corollary~\ref{cor:AgreeDisagree} and~\cite[Theorems~IV.1 and~IV.2]{BizyaevaCDC2021}. It recognizes the opinion-forming bifurcations of 
\eqref{eq:Nby2_hom} as agreement and disagreement pitchfork bifurcations and predicts their unfolding in response to distributed inputs as a function of network structure. In other words, it predicts the 
location of the two symmetric agreement (or disagreement) solutions  and how the input-driven unfolding selects one of the two solutions (see Figure~\ref{fig:bif}).

\begin{corollary} \label{cor:pitchfork}
Consider~\eqref{eq:Nby2_hom} and suppose that adjacency matrix $A$ is irreducible, i.e., the associated graph is strongly connected. Let $\lambda_{max}>0$ be the largest real-part eigenvalue of $A$, i.e. the Perron-Frobenius eigenvalue, with associated unitary positive right eigenvector $\mathbf{v}_{max}$ and unitary positive left eigenvector $\mathbf{w}_{max}$. Let $\lambda_{min}<0$ be the smallest real-part eigenvalue of $A$. Assume $\lambda_{min}$ is real, simple, and for all eigenvalues $\xi \neq \lambda_{min}$ of $A$, $\operatorname{Re}[\xi] \neq \lambda_{min}$. Let $\mathbf{v}_{min}$ and $\mathbf{w}_{min}$ be the right and left unitary eigenvectors associated to $\lambda_{min}$ with $\langle\mathbf{v}_{min},\mathbf{w}_{min}\rangle>0$.

\noindent A. \textbf{Cooperative agents.} If $\gamma>0$, inputs satisfy $\langle\mathbf{b},\mathbf{w}_{max}\rangle = 0$, 
and $\alpha+\lambda_{max}\gamma>0$, model~\eqref{eq:Nby2_hom} undergoes a supercritical pitchfork bifurcation for $u=u^{*} = \frac{d}{\alpha + \lambda_{max} \gamma }$ at which opinion-forming bifurcation branches emerge  from $\mathbf{x}=0$. The associated bifurcation branches are tangent at $\mathbf{x}=0$ to $\R\{\mathbf{v}_{max}\}$. 
The pitchfork unfolds in the direction given by $\langle\mathbf{b},\mathbf{w}_{max}\rangle$, i.e., if $\langle\mathbf{b},\mathbf{w}_{max}\rangle>0\,(<0)$, then the only stable equilibrium $\mathbf{x}^*$ for $u$ close to $u^*$ satisfies $\langle \mathbf{x}^*,\mathbf{v}_{max}\rangle>0\,(<0)$.

\noindent B. \textbf{Competitive agents.} If $\gamma<0$, inputs satisfy $\langle\mathbf{b},\mathbf{w}_{min}\rangle = 0$,
and $\alpha+\lambda_{min}\gamma>0$, model~\eqref{eq:Nby2_hom} undergoes a supercritical pitchfork bifurcation for $u=u^{*} = \frac{d}{\alpha + \lambda_{min} \gamma }$ at which opinion-forming bifurcation branches emerge from $\mathbf{x}=0$. The associated bifurcation branches are tangent at $\mathbf{x}=0$ to $\R\{\mathbf{v}_{min}\}$. 
The pitchfork unfolds in the direction given by $\langle\mathbf{b},\mathbf{w}_{min}\rangle$, i.e., if $\langle\mathbf{b},\mathbf{w}_{min}\rangle>0\,(<0)$, then the only stable equilibrium $\mathbf{x}^*$ for $u$ close to $u^*$ satisfies $\langle \mathbf{x}^*,\mathbf{v}_{min}\rangle>0\,(<0)$.
\label{propCDC}
\end{corollary}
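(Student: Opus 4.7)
The plan is to specialize the general bifurcation analysis of Corollary~\ref{cor:AgreeDisagree} to the scalar two-option model~\eqref{eq:Nby2_hom} and then promote the generic pitchfork structure to the sharper statement about supercriticality and input-driven unfolding. First I would identify model~\eqref{eq:Nby2_hom} as the reduction of the general multi-option dynamics~\eqref{EQ:generic decision dynamics} in the $\No=2$, homogeneous regime described in Section~\ref{sec: generalizing specializing}, with the additional simplification $\beta=0$ and with $\Gamma-\Delta=\gamma A$. Corollary~\ref{cor:AgreeDisagree} then directly delivers the critical threshold ($u_a=d/(\alpha+\lambda_{max}\gamma)$ in Case A, $u_d=d/(\alpha+\lambda_{min}\gamma)$ in Case B) and the one-dimensional bifurcation kernel spanned by the Perron eigenvector $\mathbf{v}_{max}$ (resp.\ by $\mathbf{v}_{min}$). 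The hypotheses on $\lambda_{min}$ guarantee that the critical eigenvalue of the Jacobian at $\mathbf{x}=0$ is simple and isolated, so center manifold reduction produces a well-defined scalar bifurcation equation.

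Next I would establish that this bifurcation is a \emph{pitchfork}, not merely a generic transcritical one, by exploiting the $\mathds{Z}_2$ equivariance of~\eqref{eq:Nby2_hom} when $\mathbf{b}=0$: since $\hat{S}_1$ is odd, the vector field satisfies $\mathbf{f}(-\mathbf{x})=-\mathbf{f}(\mathbf{x})$, and this symmetry descends to the reduced dynamics on the one-dimensional center manifold. The scalar reduced equation must therefore take the form $\dot y = \mu(u-u^*)\,y + c\, y^3 + O(y^5)$ with $\mu>0$, where $y$ parametrizes the kernel direction $\mathbf{v}_{max}$ (or $\mathbf{v}_{min}$). To prove supercriticality, I would carry out a Lyapunov--Schmidt reduction using the right eigenvector $\mathbf{v}_{max}$ and the left eigenvector $\mathbf{w}_{max}$ (normalized so $\langle\mathbf{v}_{max},\mathbf{w}_{max}\rangle>0$); the cubic coefficient evaluates to a positive multiple of $\hat{S}_1'''(0)\,\sum_i w_{max,i}\,(\alpha v_{max,i}+\gamma\sum_{k\neq i}a_{ik}v_{max,k})^3$, which simplifies on the eigenvector relation to $\hat{S}_1'''(0)\,(\alpha+\gamma\lambda_{max})^3\sum_i w_{max,i}v_{max,i}^3$. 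Because $\hat{S}_1$ is odd and sigmoidal, $\hat{S}_1'''(0)<0$, and the sign of the coefficient is then negative (supercritical). The analogous computation handles Case B.

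For the unfolding, I would treat $\mathbf{b}$ as a small imperfection parameter and repeat the Lyapunov--Schmidt reduction, keeping the input term. Projecting $\mathbf{b}$ onto the left eigenvector yields a constant forcing $\langle\mathbf{b},\mathbf{w}_{max}\rangle$ (resp.\ $\langle\mathbf{b},\mathbf{w}_{min}\rangle$) in the scalar bifurcation equation, so the unfolded normal form is $\dot y = \mu(u-u^*)\,y + c\,y^3 + \langle\mathbf{b},\mathbf{w}_{max}\rangle + O(\cdot)$. This is the universal unfolding of a $\mathds{Z}_2$-symmetric pitchfork; standard analysis (see e.g.~\cite{Golubitsky1985}) then shows that for $u$ slightly greater than $u^*$ the branch with $y$ of the same sign as $\langle\mathbf{b},\mathbf{w}_{max}\rangle$ is the unique asymptotically stable equilibrium near $\mathbf{x}=0$, and since $\mathbf{x}^*\approx y\,\mathbf{v}_{max}$ to leading order, this gives $\langle\mathbf{x}^*,\mathbf{v}_{max}\rangle$ of the same sign as $\langle\mathbf{b},\mathbf{w}_{max}\rangle$. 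The hypothesis $\langle\mathbf{b},\mathbf{w}_{max}\rangle=0$ (resp.\ $=0$ against $\mathbf{w}_{min}$) in the corollary's statement ensures that the imperfection does \emph{not} unfold the primary bifurcation at threshold, so the pitchfork itself survives; the directional unfolding claim is then the immediate structural consequence of a nonzero projection.

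The main obstacle I anticipate is the bookkeeping in the Lyapunov--Schmidt cubic coefficient: although the symmetry argument guarantees vanishing of the quadratic term, carefully tracking the network-dependent sum to show the cubic coefficient has the same sign as $\hat{S}_1'''(0)$ uniformly in graph structure requires using the eigenvector identity $\gamma A\mathbf{v}_{max}=\lambda_{max}\gamma\,\mathbf{v}_{max}$ and the positivity $\langle\mathbf{v}_{max},\mathbf{w}_{max}\rangle>0$. The corresponding step for $\mathbf{v}_{min}$ is delicate because $\mathbf{v}_{min}$ has mixed signs, so the sum $\sum_i w_{min,i}v_{min,i}^3$ must be shown not to vanish generically; this is exactly the content of the cited Theorems~IV.1 and~IV.2 of~\cite{BizyaevaCDC2021}, which I would invoke directly to close the argument rather than redo from scratch.
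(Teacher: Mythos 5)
Your proposal matches the paper's route: the paper proves this corollary simply by invoking Corollary~\ref{cor:AgreeDisagree} together with Theorems~IV.1 and~IV.2 of \cite{BizyaevaCDC2021}, which is exactly the skeleton you use, and your Lyapunov--Schmidt/equivariance sketch (odd symmetry of $\hat S_1$ forcing a pitchfork, projection of $\mathbf{b}$ onto the left eigenvector giving the unfolding term) is a faithful expansion of what those citations contain. The only caveat is that your claim $\hat S_1'''(0)<0$ does not follow from the paper's stated axioms on $S_1$ (which require only $S_1'''(0)\neq 0$), but the paper makes the same implicit assumption in asserting supercriticality, so this is not a gap relative to the paper's own argument.
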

\begin{remark}
    For \eqref{eq:Nby2} with homogeneous parameters \eqref{eq: homogeneous params} an analogous result to Corollary \eqref{cor:pitchfork} holds, except with $u^* = \frac{d}{\alpha - \beta + \lambda_{max/min} (\gamma - \delta)}$. 
\end{remark}

The symmetric opinion-forming pitchfork bifurcation predicted by Corollary~\ref{propCDC} in the case of trivial or balanced inputs $\langle\mathbf{b},\mathbf{w}_{max/min}\rangle = 0$ constitutes the simplest instance of multi-stability (bistability in this case) between different possible opinion states (see Figure~\ref{fig:bif} left for the disagreement case - an identical figure is found for the agreement case~\cite[Figure~1]{BizyaevaCDC2021}). For attention $u$ greater than the critical value $u^*$ (the bifurcation point), the group of agents can converge to either of the two stable opinion states depending on initial conditions and (in a real-world setting) unmodelled uncertainties and disturbances.

\begin{figure} 
    \centering
    \includegraphics[width=0.75\linewidth]{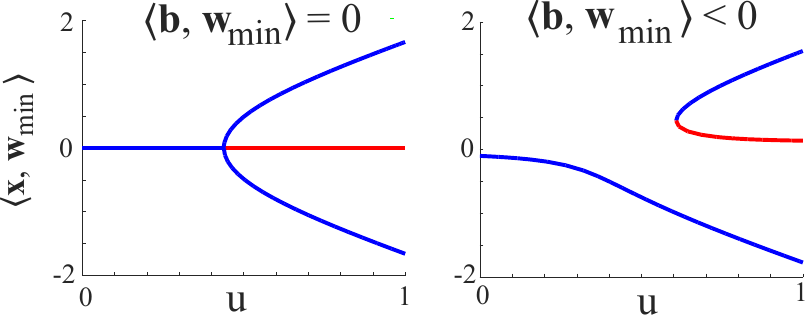}
\caption{\newtext{Bifurcation diagrams showing the symmetric pitchfork bifurcation (left) and its unfolding (right) for two-option opinion dynamics \eqref{eq:Nby2_hom} with $d=\alpha = 1$ in the disagreement regime ($\gamma =- 1$)  for three agents communicating over an undirected line graph. Blue (red) curves represent stable (unstable) equilibria. The vertical axis is  the projection of the system equilibria $\mathbf{x}$ onto $\mathbf{w}_{min}$ ($\mathbf{w}_{min}=\mathbf{v}_{min}$ since the graph is undirected). Left: $\mathbf{b} = (0.2,0,-0.2)$; right: $\mathbf{b} = -0.1 \mathbf{w}_{min} + (0.2,0,-0.2)$. Bifurcation diagrams  generated with help of MatCont 
    \cite{matcont}. In the agreement regime, the diagrams look qualititively the same with $\mathbf{w}_{min}$ replaced with $\mathbf{w}_{max}$ and $\mathbf{b}$ modified appropriately (see Figure 1 in \cite{BizyaevaCDC2021}). } }\label{fig:bif}
\end{figure}

In the agreement regime solutions on the upper  branch correspond to agents agreeing on option 1 and on the lower branch to agents agreeing on option 2. In the disagreement regime solutions on the upper branch correspond to one subgroup favoring option 1 and the second subgroup favoring option 2 and the lower branch to the first subgroup favoring option 2 and the second subgroup favoring option 1. Both the sign and relative magnitudes of the agent opinions are predicted by $\mathbf{v}_{max}$ in the agreement regime and $\mathbf{v}_{min}$ in the disagreement regime -- see Figure~\ref{fig:GraphClasses} for an illustration for four types of graphs. Observe that for the highly symmetric cycle graph, the group splits evenly in the disagreement case, whereas in the star and wheel graphs, the center node disagrees with all of the peripheral nodes. These results are easily predicted using well-known results on the eigenstructure of the adjacency matrix for these graphs. See~\cite{BizyaevaACC2021} for details. 

The symmetric pitchfork unfolds (Figure~\ref{fig:bif} right) in such a way that only one solution (that predicted by the sign $\langle\mathbf{b},\mathbf{w}_{max/min}\rangle$) is stable close to the symmetric bifurcation point. For larger values of the attention parameter, the other solution also regains stability in a saddle-node bifurcation but the input-driven asymmetry is still reflected in the size of the basin of attraction of the two solutions. The left eigenvectors of the adjacency matrix $\mathbf{w}_{max/min}$ define agreement/disagreement {\it centrality indices} because the unfolding formula $\langle\mathbf{b},\mathbf{w}_{max/min}\rangle\lessgtr 0$ implies that the larger $[\mathbf{w}_{max/min}]_i$ the larger the effect of a nonzero input $b_i$ on the agreement/disagreement pitchfork unfolding. Agreement and disagreement centrality indices can thus naturally be used to control opinion forming behavior via distributed inputs. By augmenting our opinion dynamics with an attention feedback mechanism, these centrality indices determines distributed thresholds for the triggering of opinion cascade, as illustrate in the next section (see also~\cite{franci2021analysis} for numerical illustrations on large random graphs and an application to task allocation in robot swarms). Finally, all the results in this section generalize to the case $\No>2$. This generalization requires the computation of the vector $v_{ax}$ appearing in Theorem~\ref{thm:Bifurcations} via equivariant bifurcation theory methods (see Remark~\ref{rmk:axial v}), a direction that we leave for future extensions of this work.}


\begin{figure}
    \centering
    \includegraphics[width = 0.47\textwidth]{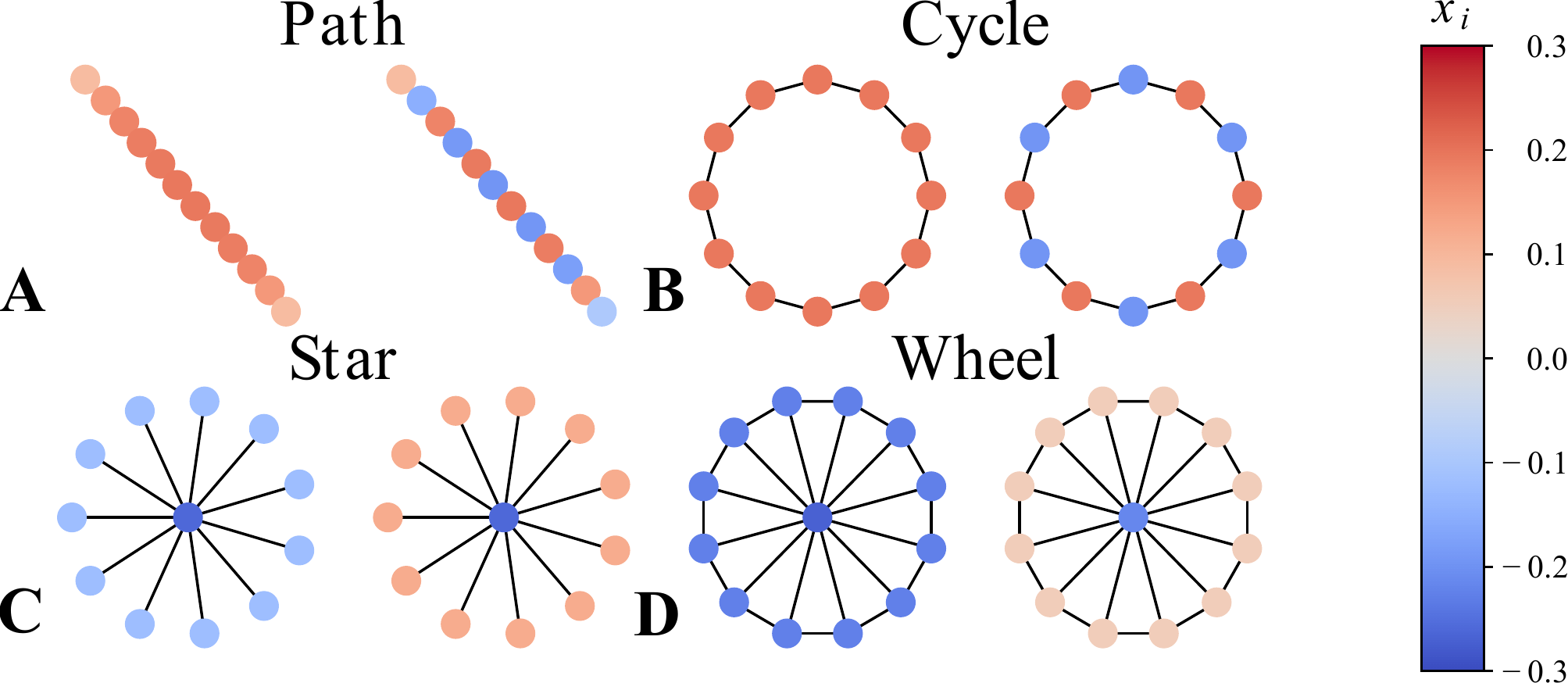}
    \caption{\newtext{Adapted from \cite{BizyaevaACC2021}. Agreement (left) and disagreement (right) opinion configurations at steady state  from simulation of two-option opinion dynamics \eqref{eq:Nby2_hom} and four different undirected graph types, with attention $u$ slightly above the critical value $u^*$ in each case. Color of each node $i$ corresponds to opinion $x_i$ at $t=500$. For all graphs  $\gamma = 1.3$  (left) and  $\gamma = -1.3$  (right), 
     $d = 1$, $\alpha = 1.2$, and $b_{i} = 0$ for all $i = 1, \dots, \Na$. For the path and cycle graphs, $u = 0.31$, and for the star and wheel graphs, $u = 0.26$.
    Randomized initial opinions are drawn from the distribution $U(-1,1)$.}}
    \label{fig:GraphClasses} 
\end{figure}

\subsection{Consensus and Dissensus Generic for \newtext{Transitive} Symmetry \label{sec:symmetry}}
\newtext{
In Section \ref{sec:bifurcations} we have shown how graph structure can inform what types of opinion configurations arise in the group. 
In this section we consider, for the homogeneous regime \eqref{eq: homogeneous params}, how 
the presence of symmetry in the communication graph can further constrains  opinion configurations. 
We show how consensus and dissensus emerge for  opinion dynamics \eqref{EQ:generic decision dynamics} 
with two different examples of transitive symmetry. 
We first introduce a few technical definitions from group theory and equivariant bifurcation theory. }

Let $\Ga$ be a compact Lie group acting on $\mathds{R}^{n}$.
Consider a dynamical system $ \dot{\mathbf{x}} = \mathbf{h}(\mathbf{x})$
where $\mathbf{x}\in \mathds{R}^{n}$ 
and $\mathbf{h}: \mathds{R}^{n} \to \mathds{R}^{n}$.  Then $\rho \in \Ga$ is a \textit{symmetry} of the system, equivalently $\mathbf{h}$ is $\rho$-\textit{equivariant}, if $\rho \mathbf{h}(\mathbf{x}) = \mathbf{h}(\rho \mathbf{x})$. 
If $\mathbf{h}$ is $\rho$-equivariant for all $\rho \in \Ga$, then $\mathbf{h}$ is $\Ga$-\textit{equivariant} \cite{GolubitskySymmetryPerspective}. 
$\Ga$-equivariance means elements of  symmetry group $\Ga$ send solutions to solutions.

The compact Lie group associated with permutation symmetries of $n$ objects is the \textit{symmetric group on $n$ symbols} $\ES_{n}$, which is the set of all bijections of  $\Omega_n := \{ 1, \ldots, n\}$ to itself (i.e., all permutations of ordered sets of $n$ elements). 
The opinion dynamics \eqref{EQ:generic decision dynamics} with homogeneous parameters \eqref{eq: homogeneous params} and  all-to-all coupling 
are {\em maximally symmetric},  i.e. $(\ES_{\No}\times \ES_{\Na})$-equivariant, where elements of $\ES_{\Na}$ permute the $\Na$-element set of agents and elements of $\ES_{\No}$ permute the $\No$-element set of options \cite{AF-MG-AB-NEL:20}. Maximally symmetric opinion dynamics are unchanged under any permutation of agents or  options.

A subgroup $\Ga_n\subset \ES_n$ is {\em transitive} if the orbit $\Ga_n(i)=\{\rho(i),\,\rho\in\Ga_n\}=\Omega$, for some (and thus all) $i\in\Omega$. $(\Ga_{\No}\times\Ga_{\Na})$-equivariant opinion dynamics, with transitive $\Ga_{\Na}$, are still {\em highly symmetric} since any pair of agents, while not necessarily interchangeable by arbitrary permutations, can be mapped into each other by the symmetry group action.  The following are examples of transitive subgroups of $\ES_{\Na}$:
\begin{itemize}
    \item $\D_{\Na}$,  dihedral group of order $\Na$; symmetries correspond to $\Na$ rotations and $\Na$ reflections. $\D_{\Na}$-equivariant opinion dynamics are unchanged if agents are permuted by a rotation or a reflection, e.g., if agents communicate over a network defined by an undirected cycle. 
    \item \newtext{$\Z_{\Na}$,  cyclic group of order $\Na$; symmetries correspond to $\Na$ rotations (and no reflections). $\Z_{\Na}$-equivariant opinion dynamics are unchanged if agents are permuted by a rotation, e.g., if agents communicate over a network defined by a directed cycle.}
\end{itemize}
Observe that the system opinion state space decomposes as $V = W_{c} \oplus W_{d}$, where  $W_c$ is the \textit{multi-option consensus space} defined as
\begin{equation}
W_c = \{(\Zz_1, \ldots, \Zz_{\Na}) \,| \, \Zz_i = \tilde{\Zz} \in \mathbf{1}_\No^\perp \, , \, \forall i \},
\label{eq:W1}
\end{equation}
and $W_{d}$ is the \textit{multi-option dissensus space} defined as
\begin{equation}
\label{eq:W2} W_d = \{(\Zz_1, \ldots,  \Zz_{\Na}) \,| \, \Zz_1 + \cdots +  \Zz_{\Na} = 0\}.
\end{equation}
On the consensus space $W_{c}$, agents have identical opinions. On the dissensus space $W_{d}$, agent opinions are balanced over the options such that the average opinion of the group is neutral.

Model-independent results~\cite[Theorem 4.6 and Remark 4.7]{AF-MG-AB-NEL:20} ensure that, in the presence of transitive symmetry, $\ker J=W_c$ or $\ker J=W_d$. I.e., if~(\ref{EQ:generic decision dynamics}) is symmetric with respect to a group $\Gamma_{a}$ that acts by swapping the agent indices transitively, then generically  $\ker J=W_c$ or $\ker J=W_d$. In the homogeneous regime 
\eqref{eq: homogeneous params},  agent symmetry of 
(\ref{EQ:generic decision dynamics}) is fully determined by  
$A$ as proved in the following proposition for the maximally symmetric case $\Ga_{a}=\ES_{\Na}$ and the highly symmetric case $\Ga_{a}=\D_{\Na}$ (see Appendix~\ref{app:transSym} for proof). The same result holds, with similar proof, for other transitive agent symmetries, e.g., $\Ga_a=\Z_{\Na}$.

\begin{proposition}\label{PROP:realization symmetries}
Consider model~(\ref{EQ:generic decision dynamics}) in the homogeneous regime defined by~\eqref{eq: homogeneous params}. Then the following hold true:
\begin{enumerate}
    \item Model~(\ref{EQ:generic decision dynamics}) is $(\ES_{\No}\times \ES_{\Na})$-equivariant if and only if $A$ is the adjacency matrix of an all-to-all graph;
    \item If $A$ is the adjacency matrix of an undirected cycle graph, then model~(\ref{EQ:generic decision dynamics}) is $(\ES_{\No}\times \D_{\Na})$-equivariant.
\end{enumerate}

\end{proposition}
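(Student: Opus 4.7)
The plan is to decouple the verification of the option symmetry factor $\ES_{\No}$ from the agent symmetry factor $\Ga_a \in \{\ES_{\Na}, \D_{\Na}\}$, then to characterize which adjacency matrices $A$ realize each agent symmetry. Throughout, I will use the natural componentwise actions: $\tau \in \ES_{\No}$ acts on a state $\Zz$ by $(\tau \Zz)_{ij} = z_{i\tau^{-1}(j)}$, and $\sigma \in \ES_{\Na}$ acts by $(\sigma \Zz)_i = \Zz_{\sigma^{-1}(i)}$. Both actions leave $V = (\mathbf{1}_\No^{\perp})^{\Na}$ invariant because permutations preserve $\mathbf{1}_\No$, and both commute with $P_0$ since $P_0$ is a scalar combination of $\mathcal{I}_{\No}$ and $\mathbf{1}_{\No}\mathbf{1}_{\No}^T$.

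First, I would prove that in the homogeneous regime the dynamics are automatically $\ES_{\No}$-equivariant for any $A$. Looking at \eqref{EQ:generic decision dynamics} with $\alpha_i=\alpha$, $\beta_i=\beta$, the $j$-th component of $\bs F_i$ singles out option $j$ in the $S_1$-term but depends symmetrically on the remaining options through the sum $\sum_{l\neq j} S_2(\cdots)$. A direct index substitution then gives $F_{i,\tau^{-1}(j)}(\Zz) = F_{ij}(\tau\Zz)$ for every $\tau \in \ES_{\No}$, establishing equivariance.

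Next, I would analyze agent symmetry. For $\sigma \in \ES_{\Na}$, writing out $F_{\sigma(i),j}(\sigma\Zz)$ and comparing with $F_{ij}(\Zz)$ reduces, in the homogeneous regime, to the single condition
\begin{equation}\label{eq:propsym_adjcond}
a_{\sigma(i)\sigma(k)} = a_{ik} \quad \text{for all } i,k.
\end{equation}
For statement 1, requiring \eqref{eq:propsym_adjcond} for every $\sigma \in \ES_{\Na}$ forces $a_{ik}$ to be constant on the off-diagonal (using that $\ES_{\Na}$ acts transitively on ordered pairs of distinct indices), and since $a_{ii}=0$ and $a_{ik}\in\{0,1\}$, the two choices are the empty graph and the all-to-all graph; excluding the trivial graph gives the stated characterization. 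Conversely, the all-to-all matrix $J_{\Na}-\mathcal{I}_{\Na}$ plainly satisfies \eqref{eq:propsym_adjcond} for every $\sigma$, yielding $\ES_{\Na}$-equivariance. For statement 2, it suffices to check \eqref{eq:propsym_adjcond} on the two generators of $\D_{\Na}$, namely the rotation $r(i)=i+1\bmod\Na$ and a reflection $s(i)=-i\bmod\Na$. For the cycle adjacency matrix $a_{ik}=1$ iff $|i-k|\equiv 1\pmod{\Na}$, and both operations preserve this distance condition; hence the dynamics are $\D_{\Na}$-equivariant.

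The main obstacle is the backward direction of statement 1, that is, deducing that \eqref{eq:propsym_adjcond} for all $\sigma\in\ES_{\Na}$ forces all-to-all coupling. The algebra itself is elementary, but care is needed to reduce equivariance of the vector field to the purely combinatorial condition \eqref{eq:propsym_adjcond}; the subtlety lies in checking that the nonlinear terms $S_1$ and $S_2$, evaluated on sums such as $\sum_k \gamma_{ik} z_{kj}$, do not produce additional constraints beyond \eqref{eq:propsym_adjcond} when all other parameters are already homogeneous. Once one verifies that the argument of each $S_q$ transforms covariantly under the permuted indices and that the sum over $l \neq j$ is permutation-invariant as noted above, the reduction to \eqref{eq:propsym_adjcond} is immediate and the remaining combinatorial step is standard.
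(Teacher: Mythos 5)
Your proposal is correct, but it is organized quite differently from the paper's argument. The paper proves part~(2) by brute-force computation on group generators: it checks equivariance under the $\No$ adjacent transpositions generating $\ES_{\No}$ by writing out the permuted components $F_{ij}(\sigma_j\Zz)$, and under the reflection and rotation generating $\D_{\Na}$, while part~(1) is not proved in the paper at all but deferred to an analogous theorem in the cited reference \cite{AF-MG-AB-NEL:20}. You instead decouple the two symmetry factors, observe that $\ES_{\No}$-equivariance holds unconditionally in the homogeneous regime, and reduce agent equivariance to the single combinatorial condition $a_{\sigma(i)\sigma(k)}=a_{ik}$; the two parts then follow from $2$-transitivity of $\ES_{\Na}$ and from checking the dihedral generators on the cycle. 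This is cleaner and more unified: it proves the ``only if'' direction of part~(1) directly rather than by citation, and it yields the paper's Remark~\ref{remark:symmetry} (that the symmetry group is the automorphism group of the graph) as an immediate byproduct. The one point you flag but do not fully close is the necessity step: to deduce the combinatorial condition from equivariance of the vector field you should extract it from, e.g., the linearization at the origin, using $S_q'(0)=1$ and the fact that $u(\gamma-\delta)\neq 0$ (or treat the $S_1$ and $S_2$ arguments separately); in the degenerate case of vanishing coupling gains the dynamics are equivariant for any $A$ and the ``only if'' fails, a caveat the paper's statement also implicitly assumes away. With that genericity assumption made explicit, your argument is complete.
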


\begin{remark}
    More generally, the symmetry group of the opinion dynamics is determined by the automorphism group of the graph associated to $A$. The proof follows
    as for Proposition~\ref{PROP:realization symmetries}. \label{remark:symmetry}
\end{remark}


The next corollary follows from Theorem~\ref{thm:Bifurcations} and~\cite[Theorem 4.6 and Remark 4.7]{AF-MG-AB-NEL:20}. The 
two types of opinion-formation behaviors proved in this corollary, i.e., consensus for cooperative agents and dissensus for competitive agents, respectively, constitute an opinion-formation analogue of consensus and balanced (split) states in coupled phase oscillators (see, e.g., \cite{OlfatiSaber2004,sepulchre2007stabilization,sepulchre2008stabilization,strogatz2000kuramoto}).

\begin{corollary}[
Consensus from Cooperation and Dissensus from Competition]
	Consider model~(\ref{EQ:generic decision dynamics}) in the homogeneous regime~\eqref{eq: homogeneous params}. Suppose that the graph associated to adjacency matrix $A$ is either all-to-all or an undirected cycle. Let $u_a$ and $u_d$ be defined by \eqref{eq:uc} and \eqref{eq:ud}. 
	
	\noindent A. {\bf Cooperative agents and consensus}. If agents are cooperative ($\gamma-\delta>0$), then opinion formation appears as a bifurcation along the consensus space at $u=u_a$ with $\lambda = \Na-1$ for the all-to-all case and $\lambda = 2$ for the cycle case.
	
	\noindent B. {\bf Competitive agents and dissensus}. If agents are competitive ($\gamma-\delta<0$), then opinion formation appears as a bifurcation along the dissensus space at $u=u_d$ with $\lambda=  -1$ for the all-to-all case, $\lambda = -2$ for the cycle case, when $\Na$ is even, and $\lambda= 2 \cos(\pi(\Na - 1)/\Na)$, when $\Na$ is odd.
 \label{cor:CD_conditions}
\end{corollary}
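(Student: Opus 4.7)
The plan is to exploit the fact that in the homogeneous regime~\eqref{eq: homogeneous params} the effective coupling matrix satisfies $\Gamma - \Delta = (\gamma-\delta)A$, so the spectrum of $\Gamma-\Delta$ is just a scaling of the spectrum of $A$, and the associated eigenspaces coincide. The critical attention value and the bifurcation kernel given by Theorem~\ref{thm:Bifurcations} (and by Corollary~\ref{cor:AgreeDisagree} in the simple-eigenvalue case) therefore depend only on $\operatorname{spec}(A)$. When the relevant extremal eigenvalue of $A$ is simple, I would directly apply Theorem~\ref{thm:Bifurcations} with $\lambda=(\gamma-\delta)\lambda_{\max/\min}(A)$ and identify $\ker J$ with either $W_c$ or $W_d$. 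When it is not simple, I would invoke Proposition~\ref{PROP:realization symmetries} together with the model-independent symmetry result \cite[Theorem~4.6 and Remark~4.7]{AF-MG-AB-NEL:20}, which guarantees that under transitive agent symmetry $\ker J$ is generically equal to $W_c$ or to $W_d$.

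First I would compute the eigenstructure of $A$ for the two topologies. For the all-to-all graph, $A=\mathbf{1}_{\Na}\mathbf{1}_{\Na}^T-\mathcal{I}_{\Na}$, so $\lambda_{\max}(A)=\Na-1$ is simple with eigenvector $\mathbf{1}_{\Na}$, and $\lambda_{\min}(A)=-1$ has multiplicity $\Na-1$ with eigenspace $\mathbf{1}_{\Na}^{\perp}$. For the undirected cycle, the eigenvalues are $2\cos(2\pi k/\Na)$, $k=0,\ldots,\Na-1$, so $\lambda_{\max}(A)=2$ is simple with eigenvector $\mathbf{1}_{\Na}$; if $\Na$ is even then $\lambda_{\min}(A)=-2$ is simple with eigenvector $(1,-1,1,-1,\ldots)$; if $\Na$ is odd then $\lambda_{\min}(A)=2\cos(\pi(\Na-1)/\Na)$ has multiplicity two. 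In the cooperative case ($\gamma-\delta>0$) the largest real-part eigenvalue of $\Gamma-\Delta$ is $(\gamma-\delta)\lambda_{\max}(A)$, which is simple with eigenvector $\mathbf{1}_{\Na}$ in both topologies; Theorem~\ref{thm:Bifurcations} then yields $\ker J=\mathbb{R}\{\mathbf{1}_{\Na}\}\otimes\mathbf{1}_{\No}^{\perp}=W_c$, and the formula~\eqref{eq:uc} gives the claimed $u_a$ with the stated $\lambda$.

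For the competitive case ($\gamma-\delta<0$), the largest real-part eigenvalue of $\Gamma-\Delta$ is $(\gamma-\delta)\lambda_{\min}(A)>0$. In the even-cycle case this eigenvalue is simple, so Theorem~\ref{thm:Bifurcations} applies directly, the bifurcation kernel lies in $W_d$, and~\eqref{eq:ud} yields $u_d$ with $\lambda=-2$. In the remaining two cases (all-to-all with $\gamma-\delta<0$, and odd-cycle) the extremal eigenvalue of $A$ is not simple, so I would instead appeal to Proposition~\ref{PROP:realization symmetries}, which certifies $(\ES_{\No}\times\ES_{\Na})$- and $(\ES_{\No}\times\D_{\Na})$-equivariance respectively, and both $\ES_{\Na}$ and $\D_{\Na}$ act transitively on the agent index set. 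Combined with \cite[Theorem~4.6 and Remark~4.7]{AF-MG-AB-NEL:20}, this forces $\ker J\in\{W_c,W_d\}$ generically; since $\mathbf{1}_{\Na}$ is associated to $\lambda_{\max}(A)>0$ and is thus \emph{not} critical when $\gamma-\delta<0$, the kernel must be $W_d$. Plugging $\lambda_{\min}(A)$ into~\eqref{eq:ud} then produces the declared values of $u_d$ and $\lambda$.

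The main obstacle is precisely the handling of the degenerate competitive cases: Theorem~\ref{thm:Bifurcations} is stated for a simple eigenvalue $\lambda$, yet the dissensus mode has large multiplicity on the all-to-all graph and multiplicity two on odd cycles. The argument therefore cannot proceed through a direct computation of a one-dimensional $\ker J$ and must instead rely on equivariant bifurcation theory, using the transitive symmetry of the graph to constrain $\ker J$ to the canonical $\ES_{\Na}$- (respectively $\D_{\Na}$-)invariant decomposition $V=W_c\oplus W_d$ and then using positivity of $(\gamma-\delta)\lambda_{\min}(A)$ to select the dissensus summand. A minor but worth-noting subtlety is to verify that the general hypothesis $\alpha-\beta+\lambda>0$ of Theorem~\ref{thm:Bifurcations} translates, after the substitution $\lambda\mapsto(\gamma-\delta)\lambda_{\max/\min}(A)$, into a nondegeneracy condition that is automatically compatible with the standing assumptions of the corollary.
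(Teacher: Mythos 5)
Your proof is correct and follows essentially the same route as the paper, which obtains this corollary directly from Theorem~\ref{thm:Bifurcations} combined with Proposition~\ref{PROP:realization symmetries} and the model-independent transitive-symmetry results of \cite[Theorem 4.6 and Remark 4.7]{AF-MG-AB-NEL:20} that force $\ker J$ to be $W_c$ or $W_d$. Your explicit spectral computations for the two graph topologies and your careful separation of the simple versus non-simple extremal-eigenvalue cases simply make explicit what the paper leaves implicit.
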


\begin{figure}
    \centering
    \includegraphics[width=0.9\columnwidth]{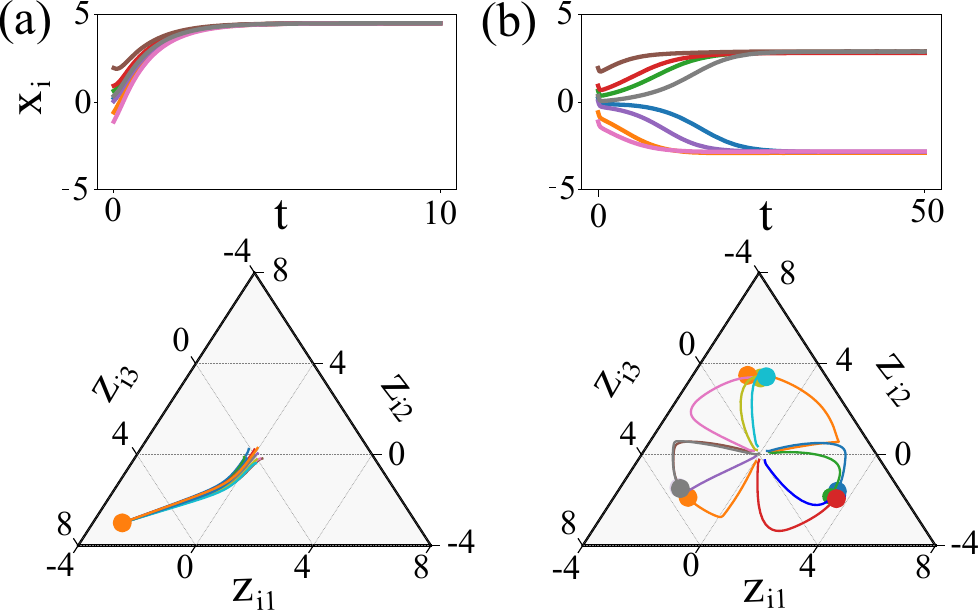}
    \caption{Simulations for $\No = 2$ options and $\Na = 8$ agents (top) and $\No = 3$ options and $\Na = 12$ agents (bottom). Opinions form (a) consensus when agents are cooperative:
    $\gamma = 0.2$, $\delta = -0.1$; (b) dissensus when agents are competitive: $\gamma = -0.1$, $\delta = 0.2$. In each plot, $\alpha = 0.2$, $\beta = 0.1$, $d = 1$, $u = 3$, $\hat{b} = 0$, and random initial conditions are the same. Communication weights $\alpha, \beta, \gamma, \delta$ were perturbed with small random additive perturbations drawn from a normal distribution with variance (a) 0.01, (b) 0.001. Ternary plots for three options generated with the help of \cite{pythonternary}.  
    }
    \label{Fig:ConesnsusDissensusTrajectories} 
\end{figure}

\begin{remark}[Stability of Consensus and Dissensus]
Consensus and dissensus solution branches predicted for the symmetric networks in Corollary \ref{cor:CD_conditions} are a consequence of the Equivariant Branching Lemma \cite[Section 1.4]{GolubitskySymmetryPerspective}, and are made of hyperbolic equilibria. Their stability can be proved using the tools in \cite[Section XIII.4]{Golubitsky1988} and \cite[Section 2.3]{GolubitskySymmetryPerspective}.
\end{remark}

\section{Attention dynamics and tunable sensitivity \label{sec:feedback}}

We have established that existence of agreement and disagreement equilibria and multistability of opinion formation outcomes arise from bifurcations of the general opinion dynamic model (\ref{EQ:generic decision dynamics}). In this section we explore how {\em ultra-sensitivity to inputs $b_{ij}$, robustness to changes in inputs, and opinion cascade dynamics} also arise from bifurcations. With the addition of dynamic state feedback for model parameters in (\ref{EQ:generic decision dynamics}), the opinion formation process can reliably amplify arbitrarily small inputs $b_{ij}$, reject small changes in input as unwanted disturbance, facilitate an opinion cascade even if only one agent gets an input, and enable groups to move easily between consensus and dissensus.  The choice of feedback design parameters determine implicit thresholds that make all of these behaviors tunable.

\newtext{The addition of dynamic state feedback for parameters in our model is similar in spirit to the extension of linear weighted-average model with nonlinear state-feedback update rules for the coupling gains, as in bounded confidence models \cite{Deffuant2000,HegselmannKrause2002,hegselmann_opinion_2005}  and biased assimilation models \cite{Dandekar2013,xia2020analysis}. However, our motivation, rather than to capture a specific sociological phenomenon, is to make our model 
adaptable to  inputs and flexibly responsive to changing environments. This is achieved by ensuring tunable sensitivity of opinion formation to  inputs. We illustrate our ideas and prove our results for the case $\No=2$. The multi-option extension is left for future work.}



\subsection{Dynamic State Feedback Law for Attention \label{sec:feebdacklaw}}
In the same spirit as~\cite{zhong2019continuous,Gray2018}, we augment the opinion dynamics (\ref{EQ:generic decision dynamics}) by introducing feedback dynamics on the attention parameter $u_{i}$ for each agent $i$, in the form of a leaky integrator with saturared input
\begin{align}
    \tau_{u} \dot{u}_{i} & = - u_{i} + S_{u}\left( \frac{1}{N_o}\textstyle\sum_{k = 1}^{\Na} \textstyle\sum_{l = 1}^{\No} \left(\bar{a}_{ik} z_{kl}\right)^{2} \right)\,,
    \label{eq:u_feedback}
\end{align}
a simple dynamics universally found, in particular, in decision making models~\cite{MusslickBizyaevaCogSci,UM2001,Bogacz2007}.
Here, $\tau_{u}>0$ is a time scale, which can be freely chosen. $S_{u}$ is a smooth saturating function, satisfying $S_{u}(0) = 0$, $S_{u}(y) \to \bar u >0$ as $y \to \infty$, $S_{u}'(y) > 0$ for all $y \in \mathds{R}$, and $S'''_{u}(y) > 0$ for all $y > 0$.  We define $S_u$ as a Hill saturating function
\begin{equation}
    S_{u}(y) = \underline u + (\bar u - \underline u) \frac{y^{n}}{(y_{th})^{n} + y^{n}} \label{eq:Su}, 
\end{equation}
where threshold $y_{th} > 0$ and $n>0$.  In \eqref{eq:Su} we constrain $\bar u$ and $\underline u$ such that  $\bar u > u_c \geq \underline u > 0$, with $u_c = u_a$ ($u_d$) when $\gamma>0$ ($<0$) and $u_a$, $u_d$ are defined by \eqref{eq:uc},\eqref{eq:ud}. For the remainder of this section we consider the homogeneous regime \eqref{eq: homogeneous params}, except for the $u_i$, which are heterogeneous. The attention coupling matrix $\bar A$ with elements $\bar a_{ik}$ can be distinct from the opinion coupling matrix $A$ but here we let $\bar{A} = A + \mathcal{I}_{\Na}$.  
For $N_o = 2$ the attention feedback dynamics \eqref{eq:u_feedback} simplify to
\begin{equation}\label{eq:u_feedback_2op}
    \tau_{u} \dot{u}_{i} = - u_{i} + S_{u}\left( \textstyle\sum_{k = 1}^{N_{a}} (\bar{a}_{ik}x_{k})^{2} \right).
\end{equation}

\subsection{Tunable Sensitivity and Robustness for a Single Agent}
\label{sec:single agent}

\newtext{In this section we first consider a single agent with dynamic opinions \eqref{eq:Nby2} and dynamic attention~\eqref{eq:u_feedback_2op} with no neighbors, i.e., $a_{ik} = 0$ for all $k = 1, \dots, N_a$. As shown in Figures~\ref{fig:tunable_sensitivity} and~\ref{fig:tunable_robustness}, the equilibria of the coupled opinion and attention dynamics can graphically be represented as the intersection of the {\em $x_i$-nullcline} $\{\dot x_i=0\}$ (black solid) and {\em $u_i$-nullcline} $\{\dot u_i=0\}$ (red dashed) in the $(u_i,x_i)$ plane. Corollary \ref{cor:pitchfork}  defines the shape of the $x_i$-nullcline as a pitchfork bifurcation which unfolds with nonzero input $b_i$, analogous to Figure~\ref{fig:bif}. }

For model \eqref{eq:Nby2},\eqref{eq:u_feedback_2op}, define agent $i$ to be \textit{strongly opinionated} when its attention is close to its upper saturation value, i.e., $u_{i} \simeq \bar u$, and \textit{weakly opinionated} when its attention is close to its lower saturation value, i.e., $u_{i} \simeq \underline u$. What we refer to as {\em tunable sensitivity of opinion formation to input $b_i$} can then be understood by comparing the plots of Figure~\ref{fig:tunable_sensitivity}, where the opinion trajectory for agent $i$ is plotted on the left for $b_i=0.5$ and on the right for $b_i=1$. For the given  parameters and $b_i=0.5$, the nullclines intersect at three points in the positive half-plane. For unopinionated initial conditions, the opinion state is attracted to the point corresponding to a weakly opinionated equilibrium: agent $i$ {\em rejects the input $b_i=0.5$} and does not form a strong opinion.  For the same parameters and  $b_i=1$, the nullclines intersect at only one point, corresponding to a strongly opinionated equilibrium.  Thus, for the same initial conditions, agent $i$ {\em accepts the input $b_i=1$} and forms a strong opinion.  The implicit {\em sensitivity threshold}\footnote{\newtext{Quantifying the exact relationship between the design parameters in the saturation function \eqref{eq:Su} and the implicit thresholds described in this section is a straightforward but lengthy calculation, which involves taking implicit derivatives of the equilibria of the coupled system with respect to the design parameters. Due to space constraints we leave out this analysis here.}} that distinguishes  rejected from  accepted inputs can be tuned by using parameters $n, y_{th}$ in \eqref{eq:Su}. Changing their value changes the shape of the $u_i$-nullcline and thereby varies how strong of an input $b_i$ is required to reduce the number of nullcline intersections from three to one, as in Figure~\ref{fig:tunable_sensitivity}.

{\em Tunable robustness of opinion formation to changes in input $b_i$} can be understood by comparing the sequence of plots in the top and bottom halves of Figure~\ref{fig:tunable_robustness}. The plots on the left show agent $i$ forming a strong opinion in the direction of the input $b_i=1$.  The plots on the right show what happens to agent $i$'s opinion when the input switches to $b_i =-1$, i.e., an input that is in opposition to the original input.  In the top sequence, when $\bar u =1$,  agent $i$ {\em accepts the change of input} and forms a strong opinion in the direction of the new input.  In the bottom sequence, when $\bar u=2.5$, agent $i$ {\em rejects the change of input} and retains a strong opinion in the direction of the original input.  The implicit {\em robustness threshold} that distinguishes rejected from accepted changes in input can be tuned by design parameter $\bar u$.

\begin{figure}
    \centering
    \includegraphics[width=0.4\textwidth]{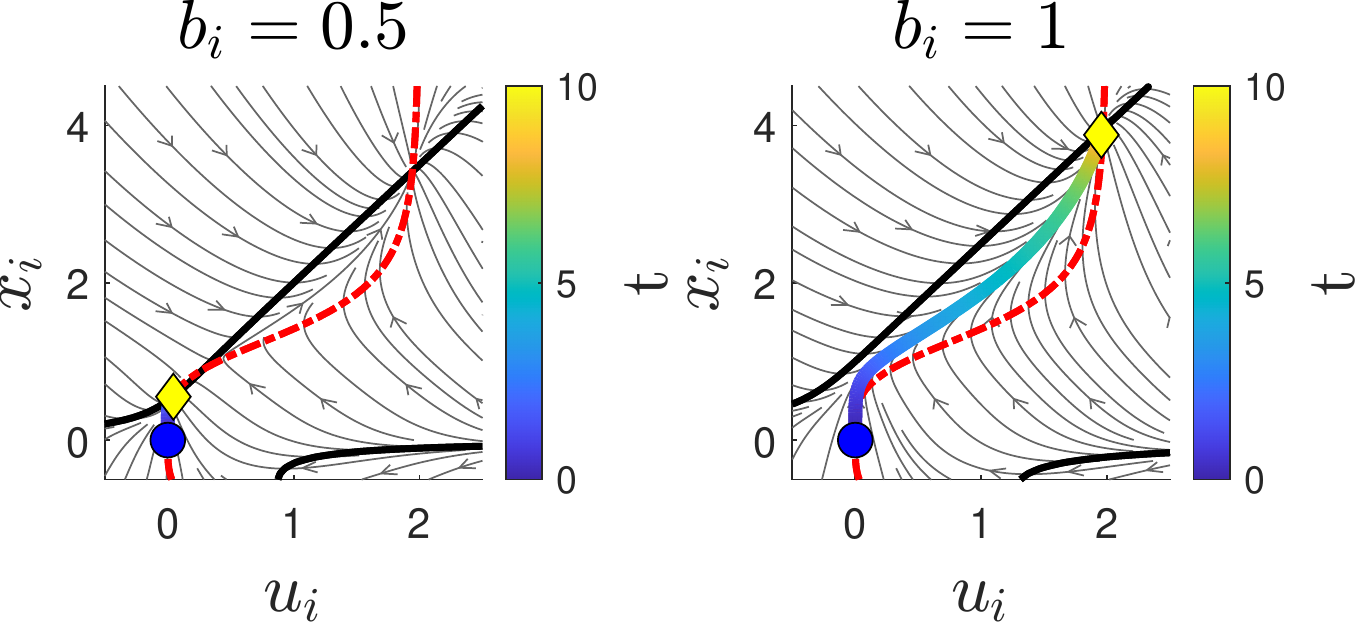}  
    \caption{Sensitivity of opinion formation to input magnitude. $(u_i,x_i)$-phase plane and trajectories of  (\ref{eq:Nby2}),(\ref{eq:u_feedback_2op}); 
    $n=2,y_{th} = 0.4,\alpha_i = 2, \beta_i = -1, \gamma_{ik} = \delta_{ik}=0,  d_i = 1, \tau_{u} = 1,\underline u = 0, \bar u = 2$ for $b_i=0.5$ (left) and $b_i = 1$ (right). 
    Initial state $(u_{i}(0),x_{i}(0)) = (0,0)$ is a blue circle, and final state a yellow diamond.  Nullclines of (\ref{eq:Nby2}) are black solid 
    and (\ref{eq:u_feedback_2op}) are red dashed.
    Gray arrows show flow streamlines. Color scale is time.}
    \label{fig:tunable_sensitivity}
\end{figure}

\begin{figure}
    \centering
    \includegraphics[width=0.4\textwidth]{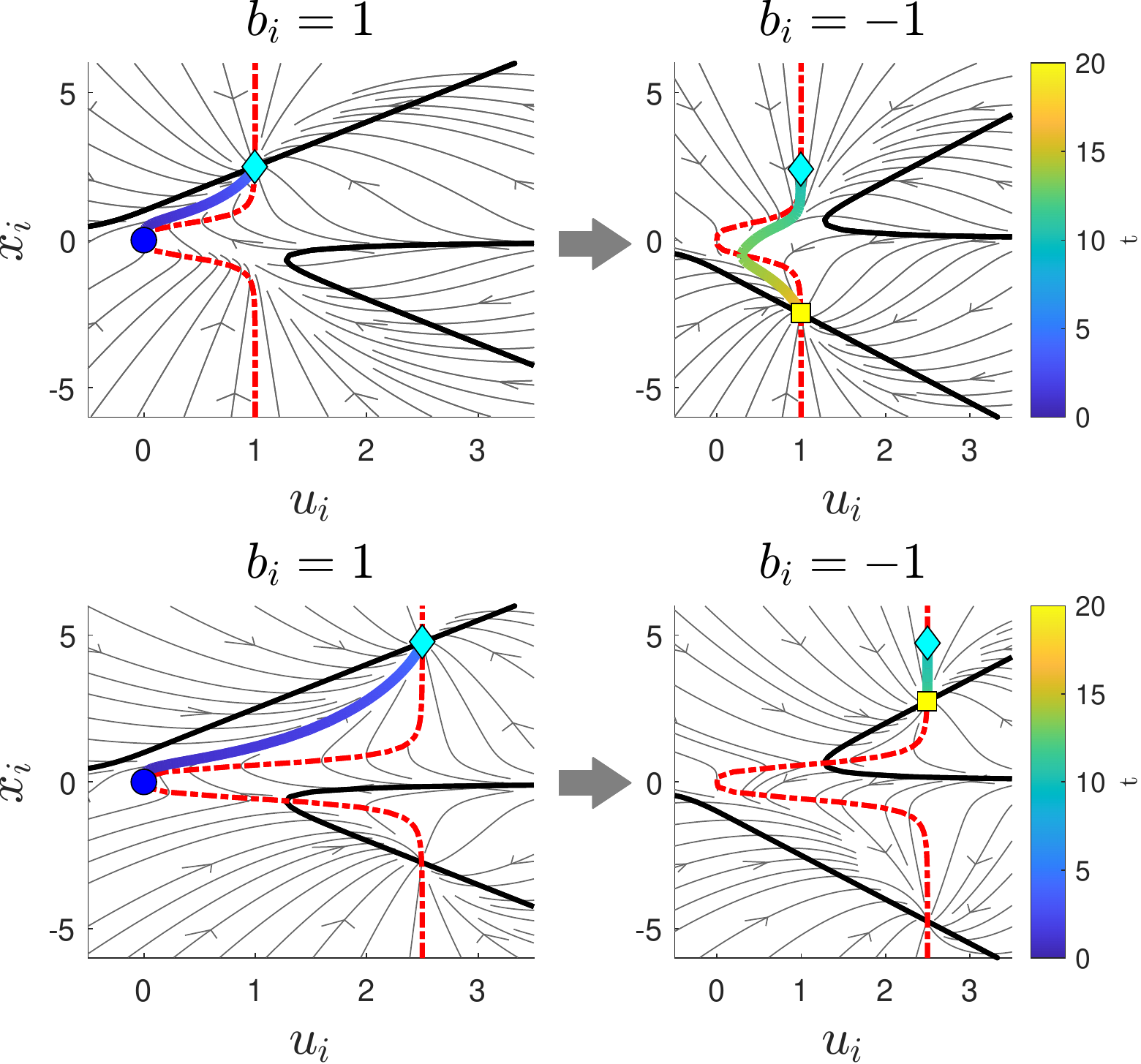}
    \caption{Robustness of opinion formation to changes in input. $(u_i,x_i)$-phase plane and trajectories of (\ref{eq:Nby2}),(\ref{eq:u_feedback_2op});
    $n=2, y_{th} = 0.4, \alpha_i = 2, \beta_i = -1, \gamma_{ik} = \delta_{ik} = 0, d_i = 1, \tau_{u} = 1, \underline u = 0$. (Left) Input is $b_i = 1$, initial state $(u_{i}(0),x_{i}(0)) = (0, 0)$ is a blue circle, and final state is a cyan diamond.  (Right) Input changes to $b_i=-1$, initial state is final state on left and final state is yellow square. Top: $\bar u = 1$, and agent changes opinion in direction of new input. Bottom: $\bar u = 2.5$, and agent retains opinion in original direction.   Nullclines, streamlines, and time are drawn as in Figure~\ref{fig:tunable_sensitivity}. }
    \label{fig:tunable_robustness}
    
    
\end{figure}

\newtext{
\subsection{Opinion Cascades with Tunable Distributed Sensitivity \label{sec: opinion cascades}}

The following corollary shows that our feedback attention dynamics create a distributed threshold for the opinion dynamics below which the agents remain weakly opinionated and above which agents converge to a strongly opinionated equilibrium. The transition from a weakly opinionated to a strongly opinionated equilibrium in response to inputs is called an {\em opinion cascade}. The threshold is defined in terms of the inner product of the vector of inputs $\mathbf{b}$ and suitable eigenvectors of the opinion network adjacency matrix. In other words, the threshold is distributed across the agents and the spectral properties of the adjacency matrix determine highly sensitive and weakly sensitive directions in the input vector space. As in Section~\ref{sec:single agent} for single agents, the threshold can be tuned with  parameters of the attention dynamics.

In the following theorem, we let $\lambda_{max}$, $\mathbf{w}_{max}$ and $\lambda_{min}$, $\mathbf{w}_{min}$ 
satisfy the  assumptions  of Corollary~\ref{propCDC}.\footnote{\newtext{The proof of Theorem~\ref{thm:cascade} follows from~\cite[Theorem~V.3]{BizyaevaCDC2021} and from geometric arguments based on implicit differentiation, similarly to the single-agent case of Section~\ref{sec:single agent}. It is omitted for space constraints.}}

\begin{theorem}\label{thm:cascade}
Consider the coupled system \eqref{eq:Nby2_shortened},\eqref{eq:u_feedback} with $d_i=d$ $\alpha_i=\alpha$, and $\gamma_{ik}=\gamma a_{ik}$, where $A=[a_{ik}]$ is a symmetric and irreducible adjacency matrix. Let $u_{c} = \frac{d}{\alpha+\lambda_{max}\gamma}$, $\mathbf{w}_c = \mathbf{w}_{max}$ if $\gamma > 0$ and $u_{c} = \frac{d}{\alpha+\lambda_{min}\gamma}$, $\mathbf{w}_c = \mathbf{w}_{min}$ if $\gamma < 0$. There exists $\varepsilon>0$ such that for $0<u_c-\underline u,y_{th}<\varepsilon$ and $n$ sufficiently large, the following generically  hold. There exists $p = p(y_{th})>0$ satisfying $\frac{\partial p}{\partial y_{th}} > 0$ such that, for $|\langle \mathbf{w}_c, \mathbf{b} \rangle | < p$, model~\eqref{eq:Nby2},\eqref{eq:u_feedback} possesses a weakly opinionated locally exponentially stable equilibrium. This equilibrium loses stability in a saddle-node bifurcation for $|\langle \mathbf{w}_c, \mathbf{b} \rangle | = p$. No weakly opinionated equilibria exist for $|\langle \mathbf{w}_c, \mathbf{b} \rangle | > p$ and all trajectories converge to a strongly opinionated agreement (disagreement) equilibrium for $\gamma>0$ ($\gamma < 0$). 
For $\gamma=0$, the strongly opinionated equilibrium $(\mathbf{x}^*,\mathbf{u}^*)$ satisfies $\operatorname{sign}(x_i^*)=\operatorname{sign}(b_i)$.
\end{theorem}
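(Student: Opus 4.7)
The plan is to combine Lyapunov--Schmidt reduction at the opinion-forming pitchfork of Corollary~\ref{cor:pitchfork} with a geometric analysis of the coupled opinion-attention nullclines, lifting the single-agent phase-plane picture of Figures~\ref{fig:tunable_sensitivity}--\ref{fig:tunable_robustness} from a scalar opinion $x_i$ to the mode amplitude $r = \langle \mathbf{w}_c, \mathbf{x}\rangle$ along the critical eigendirection $\mathbf{v}_c$. The parameters $\underline u$, $u_c$, $\bar u$ will play the same roles in the reduced bifurcation diagram as they do in the single-agent case.

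First, I would reduce the opinion dynamics~\eqref{eq:Nby2_shortened} to a one-dimensional normal form on the center manifold at $(u,\mathbf{x},\mathbf{b})=(u_c,0,0)$. Since $\ker J$ is spanned by $\mathbf{v}_c$ and its left kernel by $\mathbf{w}_c$, the amplitude $r$ satisfies, to leading order, the unfolded supercritical pitchfork
\[ \dot r = -\mu(u)\, r - c_3 r^3 + \langle \mathbf{w}_c,\mathbf{b}\rangle + \text{h.o.t.}, \]
with $\mu(u) = d - u(\alpha + \lambda_c\gamma)$ vanishing at $u_c$ and $c_3>0$ determined by $\hat S_1'''(0)$ and suitable mode inner products; the non-critical modes of $\mathbf{x}$ are slaved to $(r,u,\mathbf{b})$ at order $O(r^2 + |\mathbf{b}|)$. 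Substituting $\mathbf{x}\approx r\mathbf{v}_c$ into the attention nullcline $u_i = S_u\bigl(\sum_k (\bar a_{ik} x_k)^2\bigr)$ then gives, agent by agent, a Hill function of $r^2$ plateauing between $\underline u$ and $\bar u$. Equilibria of the full system correspond to intersections in the $(u_i,r)$-plane of this step-shaped attention nullcline with the unfolded-pitchfork $r$-nullcline, i.e., the mode-amplitude analogue of Figure~\ref{fig:tunable_sensitivity}.

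Next, for $(u_c-\underline u)$ and $y_{th}$ small and $n$ large, the attention nullcline is a sharp step near $r^2\approx y_{th}$ straddling $u_c$, while the $r$-nullcline is a shallow pitchfork unfolded by $\langle \mathbf{w}_c,\mathbf{b}\rangle$. A transversality argument shows that these curves generically intersect in three points for small $|\langle \mathbf{w}_c,\mathbf{b}\rangle|$, the lowest-$r$ intersection being the weakly opinionated stable equilibrium. As $|\langle \mathbf{w}_c,\mathbf{b}\rangle|$ grows, this branch will collide with the adjacent saddle in a saddle-node bifurcation at a unique value $p(y_{th})$; monotonicity $\partial p/\partial y_{th}>0$ follows from implicit differentiation of the tangency condition, since shifting the step to larger $r^2$ demands a larger unfolding to restore tangency. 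For $|\langle \mathbf{w}_c,\mathbf{b}\rangle|>p$ the only equilibria are strongly opinionated at $u_i\approx \bar u$, and boundedness of $S_u$ and $\hat S_1$ provides dissipativity that funnels trajectories into their basins. The pitchfork-unfolding direction of Corollary~\ref{cor:pitchfork} fixes the sign of the strongly opinionated equilibrium along $\mathbf{v}_c$; for $\gamma=0$ the opinion dynamics decouple across agents and the sign consistency $\operatorname{sign}(x_i^*)=\operatorname{sign}(b_i)$ reduces to the single-agent analysis of Section~\ref{sec:single agent}.

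The hard part will be making the center-manifold reduction uniform in the small parameters $(u_c-\underline u,\, y_{th})$ and in large $n$: the reduction is only local near $(u_c,0)$, so the strongly opinionated equilibria living at $u\approx \bar u$ must be obtained by a separate implicit-function argument applied directly to the attention nullcline for $r$ bounded away from $0$. A secondary subtlety is that the attention feedback couples agents through $\sum_k(\bar a_{ik}x_k)^2$, which mixes eigenmodes at second order; I would need to verify this mixing does not introduce spurious off-manifold equilibria, a transversality condition absorbed by the \emph{generically} qualifier in the statement. Aligning the resulting reduced saddle-node with the abstract one characterized in~\cite[Theorem~V.3]{BizyaevaCDC2021} should then close the argument.
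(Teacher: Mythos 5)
The paper does not actually contain a proof of Theorem~\ref{thm:cascade}: a footnote states only that it ``follows from \cite[Theorem~V.3]{BizyaevaCDC2021} and from geometric arguments based on implicit differentiation, similarly to the single-agent case of Section~\ref{sec:single agent},'' and omits the details for space. Your proposal---reducing to the critical mode amplitude $\langle\mathbf{w}_c,\mathbf{x}\rangle$, intersecting the resulting unfolded-pitchfork nullcline with the Hill-type attention nullcline, and obtaining the threshold $p(y_{th})$ and its monotonicity by implicit differentiation of the saddle-node tangency condition---is precisely an elaboration of that sketched route, so it takes essentially the same approach as the paper.
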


\begin{figure}
    \centering
    \includegraphics[width=0.85\columnwidth]{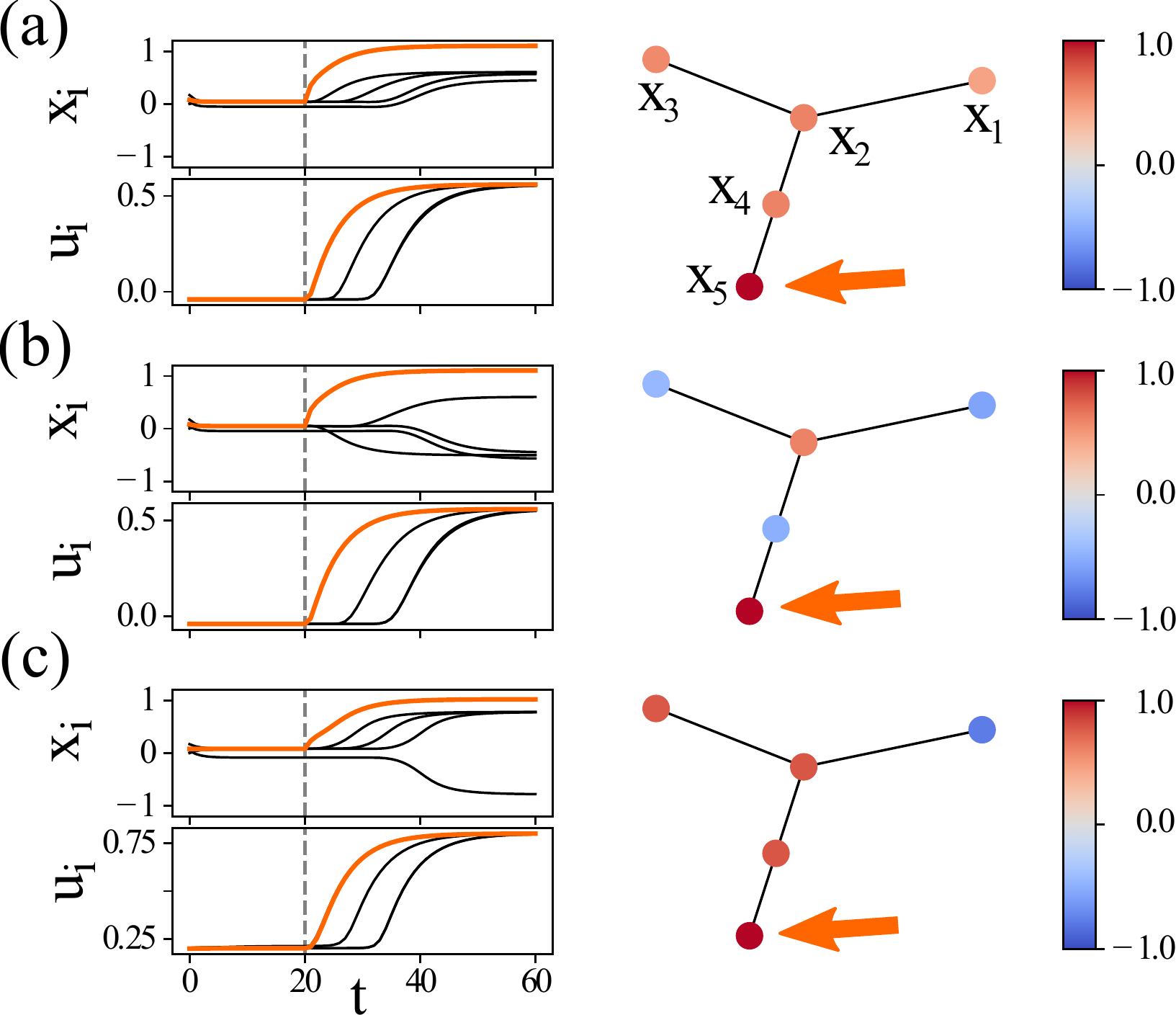}
    \caption{\newtext{Opinion cascades with opinion and attention dynamics defined in Theorem~\ref{thm:cascade}. 
    For $t < 20$, 
    $\mathbf{b} = (-0.05,0.05,0.05,0.05,0.05)$ for all three simulations. At $t = 20$ the input to agent 5 (indicated by the arrow) increases to $b_5 = 0.25$, which triggers an opinion cascade on the network. Plots show opinion and attention trajectories of the agents with agent 5 in orange. Network diagrams on the right show the opinion strength of each agent at $t = 60$ of the simulation. (a) Agreement cascade with $\gamma = 1$, the network chooses the positive option following the informed agent. (b) Disagreement cascade with $\gamma = -1$, agents' opinions on the network disperse following the sign structure of $\mathbf{v}_{min}$. (c) Agents are coupled through the attention dynamics only (i.e. $\gamma = 0$), opinion cascade causes each agent to amplify their small input and commit to a strong opinion. Other parameters: $\alpha = 2$, $n = 3$, $y_{th} = 0.1$, $\tau_u = 5$, $d = 1$, $\bar u = u^* + 0.3$, $\underline u = u^* - 0.3$, $u_i(0) = \underline u$ for all $i = 1, \dots, N_a$. $\mathbf{x}(0)$ generated randomly from a uniform distribution between $-0.2$ and $0.2$; the same initial condition was used for all three simulations.} 
    }
    \label{fig:three_cascades}
\end{figure}

Figure~\ref{fig:three_cascades} 
illustrates the predictions of Theorem~\ref{thm:cascade}. It shows that the arrival of a suprathreshold input at $t=20$ triggers an opinion cascade. Independently of the entries of the input vector $\mathbf{b}$, the cascade goes to a strongly opinionated agreement equilibrium for $\gamma>0$ (Figure~\ref{fig:three_cascades}a) and to a strongly opinionated disagreement equilibrium for $\gamma<0$ (Figure~\ref{fig:three_cascades}b). Conversely, for $\gamma=0$, the pattern of opinions at the strongly opinionated equilibrium is determined by the sign of the entries of the input vector. Figure~\ref{fig:cascade_sims} makes these observations more quantitative by showing the cascade threshold predicted by Theorem~\ref{thm:cascade} as a joint function of the norm of the input vector and of the cosine of the angle between the input vector and the relevant eigenvector of the adjacency matrix. As predicted by the theorem, when the input vector is misaligned with respect to the adjacency matrix eigenvector, large-magnitude inputs are necessary to robustly trigger an opinion cascade. Conversely, as the two vectors align, an opinion cascade can be triggered with much smaller inputs.
}

\begin{figure}
    \centering
    \includegraphics[width=0.9\columnwidth]{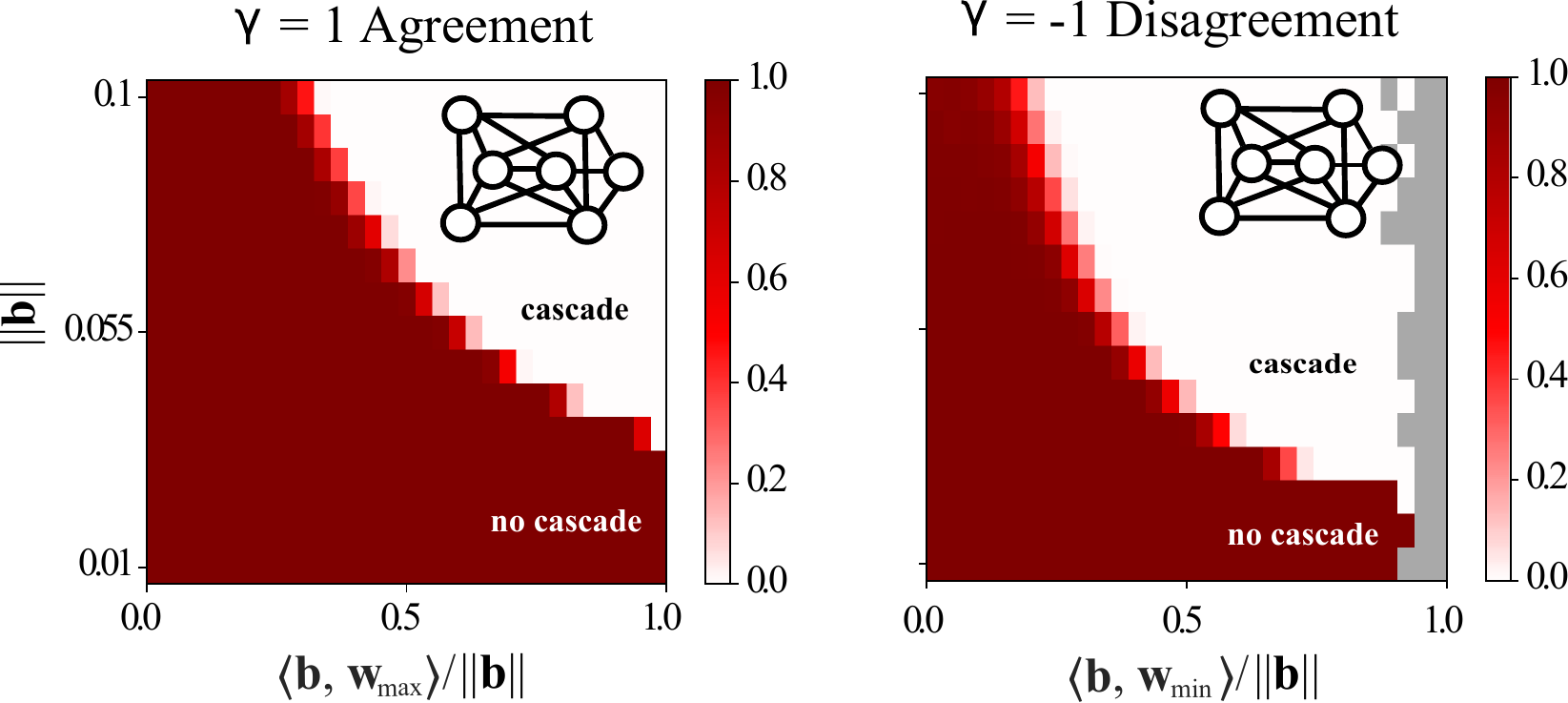}
    \caption{\newtext{Adapted from \cite{BizyaevaCDC2021}. Frequency of agreement (left) and disagreement (right) cascades for opinion and attention dynamics defined in Theorem~\ref{thm:cascade}. Color represents proportion of simulations in the given parameter range that did not result in a network cascade by $t=500$. Dark red corresponds to no cascades, white  to  always  a cascade, and grey to bins with no datapoints. Each plot shows the results of $1.5\times 10^5$ distinct simulations with $\tau_u = 10$, $y_{th} = 0.2$, $\underline u = u_{a} - 0.01$ for $\gamma = 1$ (left) and  $\underline u = u_{d} - 0.01$ for $\gamma = -1$ (right). For every simulation, initial conditions were $x_i = 0$, $u_i = 0$ for all $i = 1, \dots, \Na$ and $b_i$ were drawn from $\mathcal{N}(0,1)$ with  $\mathbf{b}$  normalized to a desired constant magnitude. 10000 simulations were performed for each constant input magnitude, with 15 magnitudes sampled uniformly spaced between $0$ and $0.1$. 
    } }
    \label{fig:cascade_sims}
\end{figure}

\section{Agreement -- disagreement transitions \label{sec:transition}}
We  illustrate 
how  feedback dynamics of social influence weights in the two-option opinion dynamics \eqref{eq:Nby2} can be used to facilitate transitions between agreement and disagreement on the network. 
Suppose agents comprise two clusters of size $N_1$ and $N_2$ with  index sets $\mathcal I_1$  and $\mathcal I_2$. 
Let $b_i= b_p$ for $i \in \mathcal I_p$ and $\hat x_p = \frac{1}{N_{p}} \sum_{i \in \mathcal{I}_{p}} x_{i}$, where $p \in \{ 1, 2 \}$.   
We define intra-cluster coupling as $\gamma_{ik}=\alpha/N_p>0$ and $\delta_{ik}=\beta/N_p<0$, $l\neq j$, $p=1,2$, $d_i = d$ for all $i,k \in \mathcal{I}_{p}$, and agent attention dynamics  by~\eqref{eq:u_feedback_2op} with $\bar a_{ik}=1$ for all $i,k$.

The influence network between the clusters is dynamic. We define feedback dynamics for the inter-cluster coupling as
\begin{subequations}
\begin{gather}
    \tau_{\gamma} \dot{\gamma}_{i} = - \gamma_{i} + \sigma  S_{\gamma}(\hat{x}_{1}\hat{x}_{2}) \\
    \tau_{\delta} \dot{\delta}_{i} = - \delta_{i} - \sigma S_{\delta}(\hat{x}_{1}\hat{x}_{2})
\end{gather}\label{eq: gamma delta dynamics}\end{subequations}
\noindent where $\sigma \in \{ 1, -1 \}$, $\tau_{\gamma}, \tau_{\delta} >0$ are time scales, $S_{\gamma}(y)=\gamma_{f} \tanh(g_{\gamma}y)$, $S_{\delta}(y)=\delta_{f} \tanh(g_{\delta}y)$, and $\gamma_f, \delta_f, g_{\gamma}, g_{\delta} >0$.


The sign of design parameter $\sigma$ in \eqref{eq: gamma delta dynamics} determines whether the system tends towards consensus or dissensus, and switching the sign can reliably trigger a transition between agreement and disagreement. 
Figure \ref{fig:control2clusters} illustrates the opinion formation of 7 agents that form two clusters, one with 3 agents and the other with 4 agents. One cluster has input favoring option 1 and the second favoring option 2. Initially, $\gamma - \delta < 0$ on average and the clusters evolve to a dissensus state which is informed by the agents' inputs. However, because  $\sigma = 1$, the two clusters eventually evolve towards a consensus state once $\gamma-\delta >0$ despite the inputs favoring disagreement. At time $t=300$, 
$\sigma$ switches sign to $\sigma = -1$ and the two clusters evolve back towards a clustered dissensus state once $\gamma-\delta < 0$. 

 \begin{figure}
        \centering
        \begin{tikzonimage}[trim=0 0 0 0, clip=true, width=0.8\columnwidth]{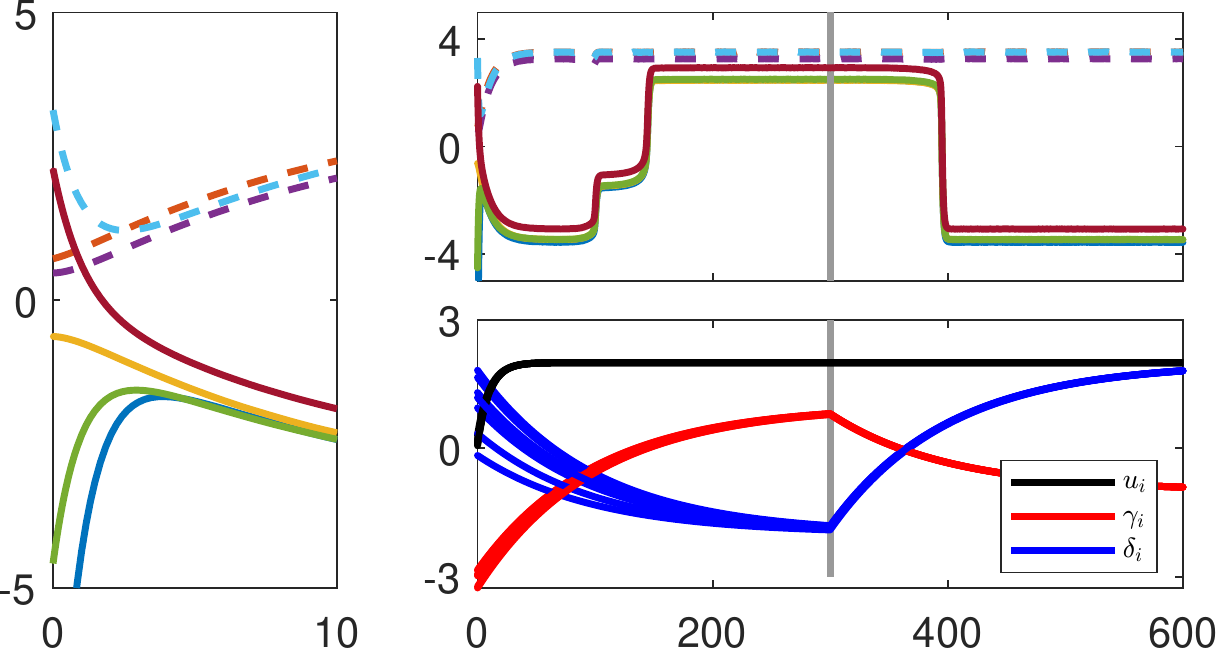}
        \node[rotate=0] at (-0.05,0.97) {(a)};
        \node[rotate=0] at (0.32,0.97) {(b)};
        \node[rotate=0] at (0.17,0.03) {\large$t$};
        \node[rotate=0] at (0.67,0.03) {\large$t$};
        \node[rotate=90] at (-0.03,0.55) {\large $x_{i}$};
        \node[rotate=90] at (0.33,0.77) {\large $x_{i}$};
        \end{tikzonimage}
        \caption{(a) Transient opinion trajectories settling to the clustered attractive manifold from random initial conditions in a simulation; (b) Full simulation. Top: opinion trajectories; Bottom: parameter trajectories. Seven agents form two clusters of sizes $N_{1} = 3$ (dashed-line opinion trajectories), $N_{2} = 4$ (solid-line opinion trajectories).
        Parameters are $d = 1$, $\alpha = 1$, $\beta = -1$, $b_{1} = 0.5$, $b_{2} = -0.5$  $\tau_{u} = 10$, $\tau_{\gamma} = \tau_{\delta} = 100$, $\gamma_{f} =2$, $\delta_{f} = 1$, $\underline u = 2$, $ g_{\gamma} = g_{\delta} = 10$, $y_{m} = 1.5$. Initial conditions $x_{i}(0)$ are drawn from $\mathcal{N}(0,2)$, $u_{i}(0)$  from $\mathcal{N}(0,0.3)$, $\gamma_{i}(0)$ from $\mathcal{N}(-3,0.3)$, and $\delta_{i}(0)$ from $\mathcal{N}(1,0.3)$. Also,  parameters $d_i$, $\alpha_i$, $\beta_i$, $b_{i}$ have additive perturbations drawn from $\mathcal{N}(0,0.1)$ independently for each agent $i$. For $t < 300$, $\sigma = 1$ and for $t \geq 300$, $\sigma = -1$.} 
        \label{fig:control2clusters}
        \vspace*{-5mm}
    \end{figure}

\section{Final Remarks}
\label{sec: final remarks}

\newtext{
Our opinion dynamics provide a new modeling framework for studying a variety of phenomena in which opinion formation is the governing behavior. In contrast to previous models, our approach focuses on the intrinsic nonlinear nature of opinion exchanges and thus on bifurcations as the key mechanism for analyzing and controlling opinion formation. Our model exhibits the flexibility, adaptability and robustness of natural opinion-forming systems, including deadlock-breaking and tunable sensitivity to changing inputs. A special instance of our model was  motivated by modeling decision making in honeybee communities~\cite{Gray2018}. The analytical tractability of our model makes it possible to tackle its rich dynamical behavior constructively. This has allowed us to make novel predictions about the role of the opinion network structure in determining the emerging patterns of opinion formations and the sensitivity of the network to exogenous inputs, as well as to design adaptive feedback control laws for the model parameters.

The applicability of our model to real-world problems has recently been confirmed by our recent contributions in sociopolitical problems~\cite{Leonard2021}, the design of task-allocation algorithms in robot swarms~\cite{franci2021analysis}, cognitive control~\cite{MusslickBizyaevaCogSci}, and game theory~\cite{ParkAllerton2021}. Other possible applications include decision making in biological and artificial neural networks, epidemiology and disease spread, and decision making in groups, from humans and robots to bacteria and animals on the move. }

\newtext{
\section*{Acknowledgements}
We acknowledge the contributions of Ayanna Matthews and Timothy Sorochkin to Figures~\ref{fig:GraphClasses} and~\ref{fig:cascade_sims}.
}

\appendix

\subsection{Extension to Heterogeneous Inter-option Coupling}
\label{app:adjacencytensor}

In future applications of the opinion dynamics model \eqref{EQ:generic decision dynamics} it may be useful to consider scenarios in which there is a heterogeneous level of influence between different options, i.e., in addition to the inter-agent interaction network there is an {\em inter-option interaction network}. In order to capture this, we introduce the \textit{adjacency tensor} whose entries $A_{ik}^{jl}$ capture the weight of influence agent $k$'s opinion on option $l$ has on agent $i$'s opinion on option $j$, which leads to the generalized opinion dynamics:
\begin{subequations}
	\begin{align*}
	& \quad \; \; \;\dot \Zz_i=P_0{\bs F}_i(\Zz)\\
	&F_{ij}(\Zz)=-d_{i}z_{ij} +  
	u_{i}\textstyle\sum_{l = 1}^\No S_{l}\left( \textstyle\sum_{k =1}^\Na A_{ik}^{jl} z_{kl}\right) + b_{ij}.
	\end{align*}
\end{subequations}
The model studied in this paper is recovered when 
$S_l$ is $S_1$ for same-option interactions and $S_2$ for inter-option interactions, and 
$A_{ii}^{jj} = \alpha_i$, $A_{ik}^{jj} = \gamma_{ik}$, $A_{ii}^{jl} = \beta_i$, and $A_{ik}^{jl} = \delta_{ik}$ for all $i,k = 1, \dots, \Na$, $j,l = 1, \dots, \No$, $i \neq k$,$j \neq l$.

\subsection{Well-definedness of Model \label{app:simplex}}
We show that the general model  \eqref{EQ:generic decision dynamics} is well defined by showing in Lemma~\ref{lem:forwardinv} that $V$ is forward invariant for \eqref{EQ:generic decision dynamics} and in Theorem~\ref{THM:realization boundedness} that solutions  are bounded.  
\newtext{ We define $D = \operatorname{diag}\{d_1, \dots, d_{\Na} \} \otimes \mathcal{I}_{\No}$. }

\begin{lemma} \label{lem:forwardinv}
$V$ is forward invariant for~\eqref{EQ:generic decision dynamics}.
\end{lemma}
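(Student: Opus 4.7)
The plan is to exploit the fact that each coordinate block of the vector field has been explicitly precomposed with the orthogonal projection $P_0$ onto $\mathbf{1}_\No^\perp$, so the dynamics are tangent to $V$ by construction. The argument is essentially immediate once this observation is spelled out.

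First I would recall the defining property of the projector: since $P_0 = \mathcal{I}_\No - \tfrac{1}{\No}\mathbf{1}_\No\mathbf{1}_\No^T$, the image of $P_0$ is exactly $\mathbf{1}_\No^\perp$. Equivalently, for any $\mathbf{y} \in \R^\No$ one has $\mathbf{1}_\No^T P_0 \mathbf{y} = 0$. Applying this to $\mathbf{y} = \bs{F}_i(\Zz)$ yields $\dot{\Zz}_i = P_0 \bs{F}_i(\Zz) \in \mathbf{1}_\No^\perp$ for every $i = 1,\ldots,\Na$ and every $\Zz$ at which the right-hand side is defined.

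Next I would verify that the constraint $\mathbf{1}_\No^T \Zz_i = 0$ defining $\mathbf{1}_\No^\perp$ is preserved along trajectories. Pick any initial condition $\Zz(0) \in V$, so $\mathbf{1}_\No^T \Zz_i(0) = 0$ for each $i$. Differentiating in time and using the previous step,
\begin{equation*}
\frac{d}{dt}\bigl(\mathbf{1}_\No^T \Zz_i(t)\bigr) = \mathbf{1}_\No^T \dot{\Zz}_i(t) = \mathbf{1}_\No^T P_0 \bs{F}_i(\Zz(t)) = 0.
\end{equation*}
Hence $\mathbf{1}_\No^T \Zz_i(t) = 0$ for all $t \geq 0$ in the maximal interval of existence, i.e., $\Zz_i(t) \in \mathbf{1}_\No^\perp$ for each $i$, so $\Zz(t) \in V$. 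This is exactly the forward invariance claim.

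The only mild subtlety, rather than a genuine obstacle, is to observe that the vector field on the right-hand side of \eqref{EQ:generic decision dynamics} is smooth on all of $\R^{\No \Na}$ (since $S_1,S_2$ are smooth sigmoids and all coefficients are constants), so local existence and uniqueness of solutions through any point of $V$ is standard, and the forward invariance argument just given then extends solutions as long as they remain bounded; boundedness itself is handled separately in Theorem~\ref{THM:realization boundedness}.
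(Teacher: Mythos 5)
Your proposal is correct and follows essentially the same route as the paper's own proof: both arguments observe that $\mathbf{1}_\No^T \dot{\Zz}_i = \sum_{j=1}^{\No}\dot z_{ij}=0$ because the right-hand side is precomposed with $P_0$, so the constraint $\sum_j z_{ij}=0$ is preserved along trajectories. Your version merely spells out the projector identity and the existence/uniqueness caveat a bit more explicitly.
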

\begin{proof}
For all $i=1,\ldots,\Na$, $\sum_{j=1}^\No \dot z_{ij}=0$, so if $z_{i1}(0) +  \cdots + z_{i\No}(0) = 0$, $z_{i1}(t) +  \cdots + z_{i \No}(t) = 0$ for all $t>0$. 
\end{proof}

\begin{theorem}[Boundedness]\label{THM:realization boundedness}
Let $\bar U$ be a compact subset of $\R$. There exists $R>0$ such that, for all $u_{i},d_{i},\alpha_{i}, \beta_{i}, \gamma_{ik}, \delta_{ik},b_{ij}\in\bar U$, $i,k=1,\ldots,\Na$, $j,l=1,\ldots,\No$, the set $V\cap\{|z_{ij}|\leq R,\,i=1,\ldots,\Na,\,j=1,\ldots,\No\}$ is forward invariant for~\eqref{EQ:generic decision dynamics}. This implies that the solutions $\Zz(t)$ of the dynamics \eqref{EQ:generic decision dynamics} are bounded for all time $t \geq 0$. 
\end{theorem}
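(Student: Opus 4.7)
The plan is to exploit two structural facts: the saturated opinion-exchange terms are uniformly bounded, and the damping term $-d_i z_{ij}$ pulls each component back toward zero. First, I would rewrite the dynamics componentwise after applying the projection $P_0$. Because $\Zz_i\in\mathbf{1}_{\No}^{\perp}$ by Lemma~\ref{lem:forwardinv}, we have $\sum_l z_{il}=0$, so the contribution $\frac{d_i}{\No}\sum_l z_{il}$ arising from $P_0$ applied to the damping term vanishes, and the dynamics reduce to
\[
\dot z_{ij} = -d_i z_{ij} + e_{ij}(\Zz),
\]
where $e_{ij}(\Zz)$ collects the projected bias $b_{ij}^{\perp}$ and the projected sigmoidal terms. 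Since $|S_q|\leq\max\{k_{q1},k_{q2}\}$ by construction, and since $u_i,b_{ij}\in\bar U$ with $\bar U$ compact, there exists a finite constant $C=C(\bar U,\Na,\No)$ such that $\sup_{\Zz\in V}|e_{ij}(\Zz)|\leq C$ uniformly in $i,j$ and in the choice of parameters in $\bar U$.

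Next I would use the nonsmooth Lyapunov-like function $\Phi(\Zz):=\max_{i,j}|z_{ij}|$ to localize the invariant box. Restricting $\bar U$ to the admissible parameter region (in particular $d_i\geq d_{\min}>0$, consistent with the standing hypothesis $d_i>0$ of the model), I would apply Danskin's theorem, or equivalently work with the upper Dini derivative of $\Phi$: at any time where $\Phi(\Zz(t))=R$ is attained at some index $(i^*,j^*)$ with $z_{i^*j^*}=\pm R$, the chain-rule bound gives
\[
D^{+}\Phi(\Zz(t))\leq -d_{\min}R + C,
\]
which is strictly negative whenever $R>C/d_{\min}$. Consequently any box $\{\Zz : \max_{i,j}|z_{ij}|\leq R\}$ with $R>C/d_{\min}$ is forward invariant, and intersecting with $V$ (forward invariant by Lemma~\ref{lem:forwardinv}) yields the desired compact forward-invariant set; boundedness of solutions for all $t\geq 0$ follows immediately.

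The main technical obstacles are minor. The key substantive requirement is that $d_i$ be uniformly bounded away from zero over $\bar U$; without dissipation the bounded forcing $e_{ij}$ could drive solutions to infinity, so $\bar U$ must lie in the admissible parameter region where $d_i>0$. The nonsmoothness of $\Phi$ is a standard technicality handled via Dini derivatives; a cleaner alternative is to use the smooth $L(\Zz)=\tfrac12\|\Zz\|^2$, for which Cauchy--Schwarz gives $\dot L\leq -d_{\min}\|\Zz\|^2 + C\sqrt{\Na\No}\,\|\Zz\|$, so $\|\Zz\|$ decreases outside a ball of radius $C\sqrt{\Na\No}/d_{\min}$ and the conclusion follows with a (slightly larger) value of $R$.
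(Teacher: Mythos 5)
Your proposal is correct and follows essentially the same route as the paper: decompose the vector field into the linear damping $-d_i z_{ij}$ plus a term bounded uniformly by compactness of $\bar U$ and boundedness of $S_q$, then show the norm decreases outside a sufficiently large ball; your ``cleaner alternative'' with $L(\Zz)=\tfrac12\|\Zz\|^2$ is precisely the paper's argument, which concludes via \cite[Theorem 4.18]{Khalil2002}. Your observation that the $d_i$ must be uniformly bounded away from zero is a fair reading of the standing assumption $d_i>0$, which the paper's proof also uses implicitly through $\min_i\{d_i\}$.
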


\begin{proof} 
	By boundedness of $S_{p}(\cdot)$, there exists $\tilde R>0$ such that, for all $u_{i},d_{i},\alpha_{i}, \beta_{i}, \gamma_{ik}, \delta_{ik},b_{ij}\in\bar U$, 
	$F_{ij}(\Zz)=-d_{i}z_{ij}+C_{ij}(\Zz)$, with $|C_{ij}(\Zz)|\leq \tilde R$. 
	For all $\Zz\in V$, it holds that
	$\frac{d}{dt}\frac{1}{2}\|\Zz\|^2=\sum_{i=1}^\Na\sum_{j=1}^\No z_{ij}\dot z_{ij}=\sum_{i=1}^\Na\sum_{j=1}^\No z_{ij}\Big(\!\!-d_{i} z_{ij}\!+\!C_{ij}(\Zz)\!+\! \!\frac{1}{\No}\sum_{l=1}^{\No}(d_{il} z_{il}-C_{il}(\Zz))\Big)=\Zz^{T} D \Zz + \sum_{i=1}^\Na\sum_{j=1}^\No z_{ij}\left(C_{ij}(\Zz) - \frac{1}{\No}\sum_{\substack{l=1}}^{\No}C_{il}(\Zz)\right)\leq \Zz^{T} D \Zz + \Na \No\tilde{R}\|\Zz\|$,
    where we have used $\sum_{j = 1}^{\No} z_{ij} = 0$ for all $i$. We compute
    $\Zz^{T} D \Zz =
        \sum_{i = 1}^{\Na} \sum_{j = 1}^{\No} \left( -d_{i}z_{ij}^{2}\right) \!+\! \frac{1}{\No}\! \sum_{i = 1}^{\Na} \sum_{l = 1}^{\No} d_{i}z_{il} \!\!\left(\sum_{j = 1}^{\No} z_{ij} \right)= \sum_{i = 1}^{\Na} \sum_{j = 1}^{\No} -d_{i}z_{ij}^{2} \leq -\min_{i}\{d_{i}\} \| \Zz \|^{2}.$
    Then, for all $\|\Zz\|\geq \frac{\Na\No\tilde R}{\min_{i} \{d_{i}\}}$, it follows that 
    $\frac{d}{dt}\frac{1}{2}\|\Zz\|^2\leq 
        -\|\Zz\|\left(\min_{i} \{d_{i}\}\|\Zz\|-\Na\No\tilde R \right) \leq 0$.
	 The result follows by \cite[Theorem 4.18]{Khalil2002}.
\end{proof}
These forward invariance and boundedness results lead to a natural connection of the opinion vector $\Zz_i \in \mathbf{1}_\No^{\perp}$ to a simplex vector $\mathbf{y}_{i} = (y_{i1}, \dots, y_{i \No})$, where $y_{ij} \geq 0$ for all $i,j$ and
$y{i1} + \dots + y_{i \No} = r, \;\;\; r > 0$,  
i.e. $\mathbf{y}_{i} \in \Delta$ where $\Delta$ is a $(\No-1)$-dimensional simplex. Define the simplex product space as $\mathcal{V} = \Delta \times \dots \times \Delta$. 

\begin{corollary}{\textbf{Mapping to the Simplex Product $\VV$.}}  \label{cor:mapping}
	Given a bounded set $\bar U\subset\R$, assume $u_{i},d_{i},\alpha_i, \gamma_{ik}, \beta_i, \delta_{ik},b_{ij}\in\bar U$, $i,k=1,\ldots,\Na$, $j,l=1,\ldots,\No$. 
	Then, the vector field of~\eqref{EQ:generic decision dynamics} 
	can be mapped from the forward invariant region $V\cap\{|z_{ij}|\leq R,\,i=1,\ldots,\Na,\,j=1,\ldots,\No\}$ to the product of simplex $\VV$ by the affine change of coordinates
	$L:V\cap\{|z_{ij}|\leq R,\,i=1,\ldots,\Na,\,j=1,\ldots,\No\} \to\VV\\
	\Zz \mapsto\frac{r}{\No R}\Zz+\frac{r}{\No}$, $r>0$.
\end{corollary}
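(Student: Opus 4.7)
The plan is to verify three things: (i) that the affine map $L$ carries the bounded forward-invariant region into the simplex product $\VV$, (ii) that $L$ is a diffeomorphism onto its image, and (iii) that the pushforward of the vector field under $L$ yields well-defined dynamics on $\VV$. Because $L$ is affine with a nonzero linear part, items (ii) and (iii) will follow essentially for free once (i) is established, so the core of the argument is a verification of membership in $\Delta$ componentwise.

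First I would check the simplex sum constraint. For any $\Zz_i \in \mathbf{1}_\No^\perp$ we have $\sum_{j=1}^{\No} z_{ij} = 0$, so setting $y_{ij} = \frac{r}{\No R} z_{ij} + \frac{r}{\No}$ gives $\sum_{j=1}^{\No} y_{ij} = \frac{r}{\No R}\cdot 0 + \No \cdot \frac{r}{\No} = r$, as required. Next, non-negativity: on the bounded set $\{|z_{ij}|\leq R\}$ we have $z_{ij} \geq -R$, hence $y_{ij} \geq \frac{r}{\No R}(-R) + \frac{r}{\No} = 0$. Thus $L(\Zz)\in\VV$. Conversely, any $\mathbf{y} \in \VV$ with $y_{ij} \leq \frac{2r}{\No}$ (i.e.\ bounded above by twice the barycentric value) lifts back into the bounded region, and on this image $L$ is clearly bijective since its linear part $\frac{r}{\No R}\mathcal{I}$ is a scalar multiple of the identity restricted to $\mathbf{1}_\No^\perp$.

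Having established the set-level mapping, I would push forward the vector field. Since $L$ is affine with constant differential $dL = \frac{r}{\No R}\mathcal{I}$, the chain rule gives
\begin{equation*}
    \dot{\mathbf{y}}_i = \frac{r}{\No R}\dot{\Zz}_i = \frac{r}{\No R} P_0 \bs{F}_i(\Zz) = \frac{r}{\No R} P_0 \bs{F}_i\bigl(L^{-1}(\mathbf{y})\bigr),
\end{equation*}
where $L^{-1}(\mathbf{y})_{ij} = \frac{\No R}{r}\bigl(y_{ij} - \frac{r}{\No}\bigr)$. Forward invariance of $\VV$ under these transported dynamics is inherited from forward invariance of the bounded region for \eqref{EQ:generic decision dynamics}, which was established in Lemma~\ref{lem:forwardinv} and Theorem~\ref{THM:realization boundedness}.

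The only mildly delicate point, and what I would flag as the main obstacle, is keeping the radius $R$ and the simplex scale $r$ consistent so that the image of the bounded region sits inside $\VV$ and tangency of the transported vector field to the simplex is preserved on the full image. Tangency to $\Delta$ follows from $\sum_j \dot y_{ij} = \frac{r}{\No R}\sum_j \dot z_{ij} = 0$, which was already used in the proof of Lemma~\ref{lem:forwardinv}. With this observation the identification between opinion coordinates and simplex coordinates is complete, justifying the interpretation of our model as a decision-making process on the product of simplices.
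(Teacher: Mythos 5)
Your proposal is correct and matches the paper's (implicit) argument: the corollary is stated as an immediate consequence of Lemma~\ref{lem:forwardinv} and Theorem~\ref{THM:realization boundedness}, and your componentwise verification that $\sum_j y_{ij}=r$ and $y_{ij}\geq 0$ on $\{|z_{ij}|\leq R\}$, together with the affine pushforward of the vector field, is exactly the intended reasoning. The paper does not spell out these details, so your write-up simply makes explicit what the authors leave to the reader.
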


The simplex product space $\mathcal{V}$ is often associated with models of opinion dynamics, e.g., in \cite{lorenz2006,sirbu_opinion_2013,Peng2015}. \newtext{ Under the mapping proposed in Corollary \ref{cor:mapping} or any other bijective mapping to the simplex product space (e.g. using the standard softmax function), the system state $\mathbf{y} = (\mathbf{y}_1, \dots, \mathbf{y}_{\Na}) \in \mathcal{V}$ can be interpreted as the \textit{absolute opinions} of agents that have equal voting capacity in the collective decision~\cite{AF-MG-AB-NEL:20}, or as probabilities of choosing a particular option.}  

\subsection{Proof of Theorem \ref{thm:clusters}}
	\label{app:clusters}
	Opinion dynamics (\ref{EQ:generic decision dynamics}) of agent $i \in \mathcal{I}_{p}$ are defined by
	\begin{align}\label{EQ:clustered_eqns}
	F_{ij}(\Zz)=-d_{p}z_{ij}  +  b_{pj} +\hspace{4.2cm}
	\end{align}
	$+ u_p ( S_{1} ( \bar \alpha_p z_{ij} + \tilde \alpha_p\sum_{k \in \mathcal{I}_{p} \setminus \{ i \} } z_{kj} 
	+ \sum_{\substack{s\neq p\\s=1}}^{N_{c}} \sum_{k \in \mathcal{I}_{s}} \tilde \gamma_{ps} z_{kj} ) 
 + \sum_{\substack{l\neq j\\l=1}}^\No S_{2} (  \bar \beta_{p} z_{il} + \tilde \beta_{p}\sum_{k \in \mathcal{I}_{p}  \setminus \{ i \}} z_{kl} 
	+ \sum_{\substack{s\neq p\\s=1}}^{N_{c}} \sum_{k \in \mathcal{I}_{s}} \tilde \delta_{ps} z_{kl} )) $.\\
Let $V_T(\Zz) = \sum_{p = 1}^{N_{c}} V_{p}(\Zz)$,  $V_{p}(\Zz) = \frac{1}{2}\sum_{i,k \in \mathcal{I}_{p}}
\sum_{j = 1}^{\No}(z_{ij} - z_{kj})^{2}$. Let 
$F_{ij}(\Zz)=-d_{i}z_{ij}+C_{ij}(\Zz)$. Then 
$\dot{V}_{p}(\Zz) = - \sum_{i \in \mathcal{I}_{p}}\sum_{k \in \mathcal{I}_{p}} d_p (\Zz_{i} - \Zz_{k})^{T}  (\Zz_{i} - \Zz_{k})+ \sum_{i \in \mathcal{I}_{p}}\sum_{k \in \mathcal{I}_{p}}\sum_{j = 1}^{\No}(z_{ij} - z_{kj})(C_{ij}(\Zz) - C_{kj}(\Zz))- \frac{1}{\No} \hspace{-0.5mm} \sum_{i \in \mathcal{I}_{p}}\sum_{k \in \mathcal{I}_{p}}\sum_{j = 1}^{\No} \sum_{l = 1}^{\No} (z_{ij} - z_{kj})(C_{il}(\Zz) - C_{kl}(\Zz))$.
The last term is zero because $\sum_{j = 1}^{\No}z_{ij} = 0$ on $V$. By the Mean Value Theorem, we can write $C_{ij}(\Zz) - C_{kj}(\Zz)$ in the second term as
$u_{p} \left( \kappa_{1} ( \bar \alpha_p - \tilde \alpha_p) - \kappa_{2}(\bar \beta_p - \tilde \beta_p ) \right)(z_{ij} - z_{kj})^{2}$,
where $\kappa_{1} \in K_{1}$ and $\kappa_{2} \in K_{2}$. Then we find that  $\dot{V}_{p}(\Zz) \leq \!\!\! \sup_{\kappa_{1} \in K_{1}, \kappa_{2} \in K_{2}}\! \! \Big\{- d_{p}  + u_{p} \kappa_{1} ( \bar \alpha_p - \tilde \alpha_p ) + u_{p} \kappa_{2} (\bar\beta_p - \tilde \beta_p ) \Big\} 2 V_{p}(\Zz) $.
When (\ref{eq:clustering_cond}) is satisfied, using LaSalle's invariance principle \cite[Theorem 4.4]{Khalil2002}  every trajectory of \eqref{EQ:generic decision dynamics} 
converges exponentially in time to the largest invariant set of $V_T(\mathbf{Z}) = 0$, which is $\mathcal{E}$. Let $\hat{z}_{pj} = z_{ij}$ for any $i \in \mathcal{I}_{p}$.  The dynamics (\ref{EQ:clustered_eqns}) on $\mathcal{E}$ reduce to (\ref{EQ:generic decision dynamics}) with $\Na = N_{c}$ and
weights (\ref{eq:cluster_weights}).

\newtext{
\begin{remark}\label{remark: symmetry clustering appe}
    This proof could alternatively be carried out using a group-theoretic approach outlined in \cite{sorrentino2016complete}. However this approach would only guarantee local stability of the clustered manifold, and the Lyapunov function approach presented here instead provides a global stability guarantee. 
\end{remark}}


\subsection{Proof of Theorem \ref{thm:Bifurcations}} \label{app:mainTheorem} \newtext{
$J = \left((-d + u\left(\alpha - \beta)\right)\mathcal{I}_{\Na}+u(\Gamma - \Delta)\right) \otimes P_0. $
Thus, eigenvalues of $J$ are of the form $\xi_i \lambda_o$, where $\xi_i$ is an eigenvalue of $(-d + u\left(\alpha - \beta)\right)\mathcal{I}_{\Na}+u(\Gamma - \Delta))$ and $\lambda_o$ is an eigenvalue of $P_0$ restricted to $V$. Because the only eigenvalue of $P_0$ restricted to $V$ is one, $\lambda_o=1$, whereas
$\xi_i=-d + u(\alpha - \beta)+u\lambda_i$,
where $\lambda_i$, $i=1,\ldots,\Na$ is an eigenvalue of $\Gamma - \Delta$. Thus, whenever $\alpha-\beta+\lambda>0$, all eigenvalues of $J$ are negative for $u<u^*$, zero is an eigenvalue of $J$ for $u=u^*$ (with multiplicity $(\No-1)N_{\lambda}$, where $N_{\lambda}$ is the multiplicity of $\lambda$), and there exist positive eigenvalues for $u>u^*$. The form of the eigenvectors of $J$ corresponding to its zero eigenvalue for $u=u^*$ follows since the eigenvectors of the Kronecker product of matrices is the Kronecker product of the eigenvectors. For simple $\lambda$, the statement follows from the Equivariant Branching Lemma~\cite[Section~1.4]{GolubitskySymmetryPerspective}.}


\subsection{Proof of Proposition \ref{PROP:realization symmetries}}
\label{app:transSym}

The proof of  (1) follows analogously to that of \cite[Theorem 2.5]{AF-MG-AB-NEL:20} with the additional coefficient $d_{i}$ on the linear terms. It is omitted due to space constraints.

To prove  (2), it is sufficient to show equivariance of the dynamics under the action of  generators of  
$S_{\No}\times D_{\Na}$. Element $\sigma \in S_{\No}$ acts on $V$ by permuting the elements 
of each agent's opinion  $\Zz_{i}$. Generators of $S_{\No}$ are $\No$ transpositions $\sigma_{j}$ where each $\sigma_{j}$ swaps adjacent 
elements $j$ and $j+1$ (or $\No$ and $1$ when $j = \No$). Let $\mathbf{F}_{i}(\Zz) = (F_{i1}(\Zz), \dots,  F_{i\No}(\Zz))$ and observe that $\sigma_{j} \mathbf{F}_{i}(\Zz) = (F_{i1}(\Zz), \dots, F_{i(j+1)}(\Zz), F_{ij}(\Zz), \dots,  F_{i\No}(\Zz))$. 
Computing $\mathbf{F}_{i}(\sigma_{j} \Zz)$, only $F_{ij}$ and $F_{i(j+1)}$ are changed, with
$F_{ij}(\sigma_{j} \Zz)\hspace{-0.5mm}=\hspace{-0.5mm}-d z_{i(j+1)}+
	u\Big(S_{1}\left(\alpha z_{i(j+1)}  +  \gamma  z_{(i-1)(j+1)} + \gamma z_{(i+1) (j+1)}\right)+ \hspace{-1mm}\sum_{\substack{l\neq (j+1)\\l=1}}^\No \hspace{-1mm} S_{2}\left( \beta z_{il}+ \delta z_{(i-1)(j+1)} + \delta z_{(i+1) (j+1)} \right) \Big)+\hat{b} $.
Thus, $\sigma_{j} \mathbf{F}_{i}(\Zz) = \mathbf{F}_{i}(\sigma_{j}\Zz)$ for all $j = 1...\No$, and for all $i = 1, \dots, \Na$, and the dynamics are equivariant under the action of $S_{\No}$. Element $\rho \in D_{\Na}$ acts on $V$ by permuting the order of the agent vectors $\Zz_{i}$ in the total system vector $\Zz = (\Zz_{1}, \dots, \Zz_{\Na})$. The generators of $D_{\Na}$ are the reflection element $\rho_{1}$ which reverses the order of elements in $\Zz$, and a rotation $\rho_{2}$ which cycles forward the vector by one element, mapping each element $i$ to $i+1$ (and $\Na$ to $1$). Let $\mathbf{F}(\Zz) = (\mathbf{F}_{1}(\Zz), \dots, \mathbf{F}_{\Na}(\Zz))$ and observe that $\rho_{1} \mathbf{F}(\Zz) = (\mathbf{F}_{\Na}(\Zz),\mathbf{F}_{\Na-1}(\Zz),  \dots,\mathbf{F}_{2}(\Zz),  \mathbf{F}_{1}(\Zz))$ and $\rho_{2} \mathbf{F}(\Zz) = (\mathbf{F}_{\Na}(\Zz),\mathbf{F}_{1}(\Zz), \mathbf{F}_{2}(\Zz), \dots,  \mathbf{F}_{\Na-1}(\Zz))$. For compactness we leave out the full expression for $F_{ij}(\rho_{p} \Zz)$. 

\bibliographystyle{IEEEtran}
\bibliography{references}

\end{document}